\newtheorem{theorem}{Theorem}
\newtheorem{lemma}{Lemma}
\newtheorem{corollary}{Corollary}
\newtheorem{proposition}{Proposition}
\newtheorem{definition}{Definition}
\bmdefine{\Bt}{t}
\bmdefine{\BX}{X}
\bmdefine{\BY}{Y}
\bmdefine{\BZ}{Z}
\bmdefine{\BB}{B}
\bmdefine{\BM}{M}
\bmdefine{\BD}{D}
\bmdefine{\Bi}{i}
\bmdefine{\Bj}{j}
\bmdefine{\Bk}{k}
\bmdefine{\Bx}{x}
\bmdefine{\By}{y}
\bmdefine{\Bz}{z}
\bmdefine{\Bv}{v}
\bmdefine{\Bw}{w}
\bmdefine{\Bn}{n}
\bmdefine{\Ba}{a}
\bmdefine{\Bb}{b}
\bmdefine{\Bc}{c}
\bmdefine{\Be}{e}
\bmdefine{\Bu}{u}
\bmdefine{\Bp}{p}
\bmdefine{\Bzero}{0}
\bmdefine{\Bone}{1}
\newcommand{\N}{{\mathbb N}}
\newcommand{\R}{{\mathbb R}}
\newcommand{\cB}{{\cal B}}
\newcommand{\cI}{{\cal I}}
\newcommand{\cS}{{\cal S}}
\newcommand{\cG}{{\cal G}}
\newcommand{\cN}{{\cal N}}
\newcommand{\figcaption}[1]{\def\cptype{Figure} \caption{#1}}
\newcommand{\tblcaption}[1]{\def\cptype{Table} \caption{#1}}
\newcommand{\figref}[1]{Figure \ref{#1}}
\newcommand{\eq}[1]{(\ref{#1})}
\newcommand{\ip}{i^{\prime}}
\newcommand{\jp}{j^{\prime}}
\newcommand{\idp}{i^{\prime \prime}}
\newcommand{\jdp}{j^{\prime \prime}}
\newcommand{\rsn}{r _1< r _2< \cdots < r _{N+1}}
\newcommand{\csn}{c _1< c _2< \cdots < c _{N+1}}
\newcommand{\cikl}{{\cI _{kl}}}
\newcommand{\cilk}{{\cI _{lk}}}
\newcommand{\cikk}{{\cI _{kk}}}
\newcommand{\cill}{{\cI _{ll}}}
\newcommand{\pij}{p_{ij}}
\newcommand{\xij}{x_{ij}}
\newcommand{\xil}{x_{il}}
\newcommand{\xjk}{x_{jk}}
\newcommand{\xik}{x_{ik}}
\newcommand{\xjl}{x_{jl}}
\newcommand{\xip}{x_{i+}}
\newcommand{\xpj}{x_{+j}}
\newcommand{\xSn}{x_{S_n}}
\newcommand{\yij}{y_{ij}}
\newcommand{\zij}{z_{ij}}
\newcommand{\zipj}{z_{i^{\prime}j}}
\newcommand{\zijp}{z_{ij^{\prime}}}
\newcommand{\zipjp}{z_{i^{\prime}j^{\prime}}}
\newcommand{\zidpjp}{z_{i^{\prime \prime}j^{\prime}}}
\newcommand{\zipjdp}{z_{i^{\prime}j^{\prime \prime}}}
\newcommand{\zip}{z_{i+}}
\newcommand{\zpj}{z_{+j}}
\newcommand{\hmij}{\hat{m}_{ij}}
\newcommand{\normz}{\| \Bz \| _1}
\newcommand{\as}{A_S}
\newcommand{\asns}{A_{S_1,\ldots, S_N}}
\newcommand{\atqs}{A_{T_1,\ldots, T_Q}}
\newcommand{\cFt}{{\cal F}_{\Bt}}
\newcommand{\ma}{{\cal M}_A}
\newcommand{\ms}{{\cal M}_S}
\newcommand{\msns}{{\cal M}_{S_1,\ldots ,S_N}}
\newcommand{\mtqs}{{\cal M}_{T_1,\ldots ,T_Q}}
\newcommand{\masns}{{\cal M}_{A_{S_1,\ldots ,S_N}}}
\newcommand{\rsns}{R_{S_1,\ldots, S_N}}
\newcommand{\isns}{I_{S_1,\ldots, S_N}}
\newcommand{\sumiR}{\sum_{i=1}^{R}}
\newcommand{\sumjC}{\sum_{j=1}^{C}}
\newcommand{\sumnN}{\sum_{n=1}^{N}}
\newcommand{\sumqQ}{\sum_{q=1}^{Q}}
\newcommand{\sumij}{\sum_{(i,j) \in \cI}}
\newcommand{\sumijSn}{\sum_{(i,j) \in {S_n}}}
\title{Markov Bases for Typical Block Effect 
Models of \\ Two-way Contingency Tables}
\author{Mitsunori Ogawa%
\thanks{Department of 
Mathematical Informatics, 
Graduate School of Information Science and Technology, 
University of Tokyo.}~
and Akimichi Takemura\footnotemark[1]\ \thanks{JST CREST}}
\date{December 2011}
\begin{document}
\maketitle

\begin{abstract}
Markov basis for statistical model of contingency tables
gives a useful tool for performing the conditional test of
the model via Markov chain Monte Carlo method.
In this paper we derive explicit forms of Markov bases for
change point models and block diagonal effect models, which are 
typical block-wise effect models of two-way contingency tables,
and perform conditional tests with some real data sets.
\end{abstract}

\noindent{\it Keywords and phrases:} 
block diagonal effect model, change point model,
Markov chain Monte Carlo, quadratic Gr\"{o}bner basis, toric ideal.

\section{Introduction}
\label{sec:introduction}

Goodness-of-fit tests for statistical models of
contingency tables are usually performed by
the large sample approximation 
to the null distribution of test statistics.
However, as shown in \citet{haberman-1988},
the large sample approximation
may not be appropriate when the expected frequencies
are not large enough. 
In such cases it is desirable to use a conditional testing procedure.
In this paper we discuss
a conditional testing via Markov chain Monte Carlo (MCMC) method with
Markov bases.

\citet{diaconis-sturmfels} showed the equivalence of a Markov basis
and a binomial generator for the toric ideal arising from
a statistical model of discrete exponential families
and developed an algebraic sampling method for conditional
distributions.
Thanks to their algorithm,
once we have a Markov basis for a given statistical model,
we can perform a conditional test for the model via MCMC method.
However, the structure of Markov bases are complicated in general.
Many researchers have studied the structures of Markov bases 
in algebraic statistics 
(e.g. \citet{divide-conquer-2004}, \citet{aoki-takemura-2005jscs}, \citet{Rapallo-2006}, \citet{h-a-t-2010}).

In this paper we derive Markov bases for some statistical models of two-way contingency tables
considering the effect of subtables.
It is a well-known fact that the set of square-free moves
of degree two  (basic moves)
forms the minimal Markov basis for complete independence model
of two-way contingency tables.
On the other hand,  when a subtable effect is added to the model,
the set of  basic moves does not necessarily form a Markov basis.
This problem is called {\it two-way subtable sum problem} (\citet{subtable}).
In the previous researches statistical models with one subtable effect are considered.
We consider some statistical models including several subtable effects.

The organization of this paper is as follows. 
In Section \ref{sec:subsump} we summarize the notations and definitions
on Markov bases and introduce two-way subtable sum problems.
In Section \ref{sec:change} we derive the minimal Markov basis
for the configuration arising from two-way change point model
and discuss the algebraic properties of the toric ideal arising from the change point model.
Section \ref{sec:common} and \ref{sec:general}
give the explicit forms of Markov bases for
the configurations arising from some block diagonal effect models.
In Section \ref{sec:examples} we apply the MCMC method with
our Markov bases to some data sets
and confirm that it works well in practice.
We conclude the paper with some remarks
in Section \ref{sec:remarks}.

\section{Two-way subtable sum problem}
\label{sec:subsump}

In this section we summarize notations and definitions on Markov bases
and give a brief review of two-way subtable sum problems.

\subsection{Preliminaries}
\label{subsec:notations}
Let $\N = \{ 0,1,2,\ldots \}$ and
let $\Bx = \{ x_{ij} \} , \xij \in \N , i=1,\ldots, R, j=1,\ldots , C$ be 
an $R \times C$ two-way contingency table
with nonnegative integer entries.
Let $\cI = \{ (i,j) \mid 1 \leq i \leq R, 1 \leq j \leq C \}$ 
be the set of cells.
We order the elements of contingency table $\Bx$ lexicographically
and regard $\Bx$ as a column vector.

Let $A$ be a $T \times | \cI |$ zero-one matrix
where $T$ is a positive integer and $|\cI |= RC$.
We assume that the subspace of $\R ^{|\cI |}$ spanned by
the rows of $A$ contains 
the 
$|\cI |$-dimensional row vector $(1,\ldots ,1)$.
For a given $\Bt \in \N ^T$
the set of contingency tables
\[
	\cFt = \{ \Bx \in \N ^{ | \cI | } \mid A \Bx = \Bt \}
\]
is called the $\Bt$-fiber.
An integer table $\Bz$ with $A \Bz = 0$ is called a move for $A$.
Define the degree of move $\Bz$ as $\| \Bz \| _1 /2 = \sumij |\zij | /2$. 
Let $\ma = \{ \Bz \mid A \Bz =0 \}$ 
denote the set of moves for $A$.
A subset $\cB \subseteq \ma$ is called sign-invariant 
if $\Bz \in \cB$ implies $-\Bz \in \cB$.
Markov basis for $A$ is defined as follows.

%
%
\begin{definition} \label{def:markov}
A sign-invariant finite set of moves $\cB \subseteq \ma$ is 
a Markov basis for $A$,
if for any $\Bt$ and $\Bx , \By \in \cFt (\Bx \neq \By)$
there exist $U >0, \Bz _{v_1}, \ldots, \Bz_{v_U} \in \cB$
such that
\[
	\By = \Bx + \sum_{s=1}^U \Bz _{v_s} \quad
	and \quad
	\Bx + \sum_{s=1}^u \Bz _{v_s} \in \cFt \ for \ 1 \leq u \leq U.
\]
\end{definition}
In this paper we only consider sign-invariant sets of moves
as Markov bases.
Since the row vector $(1, \ldots , 1)$
is contained in the subspace of $\R ^{|\cI |}$ spanned by
the rows of $A$,
$\sum _{(i, j) \in \cI , \zij >0} \zij = - \sum _{(i, j) \in \cI , \zij <0} \zij$
holds for every move $\Bz \in \ma$.
A move $\Bz$ can be written as $\Bz = \Bz^+ - \Bz^-$
where $\Bz ^+=\{ \max (\zij, 0) \}$ and $\Bz ^-=\{ \max (-\zij, 0) \}$.
If there exists a fiber $\cFt = \{ \Bz^+ , \Bz^- \}$,
we say that $\Bz$ is an indispensable move.

Suppose that $\Bx$ and $\By$ are in the same fiber $\cFt$
and the $l_1$-norm $\| \Bx -\By \| _1= \sumij |\xij - \yij | $
is not equal to zero.
We say that $\| \Bx -\By \| _1$ can be reduced 
by a subset $\cB \subseteq \ma$
if there exist $\tau ^+ \geq 0, \tau ^- \geq 0$,
$\tau ^+ + \tau ^- >0$, and sequences of moves $B_s^+ \in \cB$,
$s=1,\ldots ,\tau ^+$, and $B_s^- \in \cB$ ,
$s=1,\ldots ,\tau ^-$, satisfying
\begin{eqnarray*} 
	&& \left\| \Bx - \By + \sum_{s=1}^{\tau ^+} B_s^+ + \sum_{s=1}^{\tau ^-} B_s^- \right\| _1 < \| \Bx - \By \| _1, \\
	&& \Bx + \sum_{s=1}^{\tau ^{\prime}} B_s^+ \in \cFt, \qquad \tau ^{\prime} = 1,\ldots , \tau^+, \\
	&& \By - \sum_{s=1}^{\tau ^{\prime}} B_s^- \in \cFt, \qquad \tau ^{\prime} = 1,\ldots , \tau^-.
\end{eqnarray*}
It is easy to see that 
a subset $\cB \subseteq \ma$ is a Markov basis for $\ma$
if $\| \Bx -\By \| _1$ can be reduced by $\cB$ 
for all $\Bx$ and $\By$ in every fiber $\cFt$.

\subsection{Two-way subtable sum problem}
\label{subsec:review}
In this subsection we introduce two-way subtable sum problems and 
give a brief review of previous researches.

For a contingency table $\Bx$ 
denote the row sums and column sums of $\Bx$ by
\[
	\xip = \sumjC \xij, \ i=1, \ldots ,R, \quad
	\xpj = \sumiR \xij, \ j=1, \ldots ,C.
\]
Let $S_1, \ldots , S_N$ be subsets of $\cI$ and 
define the subtable sums $\xSn, n=1,\ldots ,N$, by 
\[
	\xSn = \sumijSn \xij.
\]
We summarize the set of row sums, column sums and 
the subtable sums as a column vector
\[
	\Bt = (x_{1+}, \ldots, x_{R+}, x_{+1}, \ldots, x_{+C}, x_{S_1}, \ldots, x_{S_N}) ^{\prime}.
\]
Then, with an appropriate zero-one matrix $\as$,
the relation between $\Bx$ and $\Bt$ is written by
\[
	\asns \Bx = \Bt .
\]
The set of columns of $\asns$ is 
a configuration defining a toric ideal $I_{\asns}$.
For simplicity we call $\asns$ 
the configuration for $S_1, \ldots ,S_N$
and abbreviate the set of moves $\masns$ for $\asns$ as $\msns$.

Consider a square-free move of degree 2 (basic move) of the form
\[
	\begin{matrix}
				& j		& \jp \\
		i		& +1	& -1	\\
		\ip	& -1	& +1
	\end{matrix}
\]
for $i \neq \ip$ and $j \neq \jp$.  
For simplicity we denote this move by $(i, j) (\ip, \jp) - (\ip, j) (i, \jp)$.
Similarly a move of degree $d$ is denoted
by $(i_1, j_1) \cdots (i_d, j_d) - (i_2, j_1) \cdots (i_d, j_{d-1}) (i_1,j_d)$
for appropriate $i_1, \ldots, i_d$ and $j_1, \ldots, j_d$.
\citet{subtable} and \cite{common-diagonal}
discussed  Markov bases for the configuration $\as$
for one subtable $S \subseteq \cI$.
In \cite{subtable} 
it is shown that the set of basic moves in $\ms$
is a Markov basis for $\as$ if and only if $S$ is either 
$2\times 2$ block diagonal or triangular.
\citet{ohsugi-hibi-subtable} discussed the same problem
from algebraic viewpoint.

In Sections \ref{sec:change}--\ref{sec:general}
we use the notations on signs of subtables:
Let $\Bx, \By$ be the two contingency tables in the same fiber $\cFt$
and let $\Bz = \Bx - \By$.
Consider a subset $\hat{\cI} \subseteq \cI$.
If $\zij =0$ for every $(i,j) \in \hat{\cI}$,
we denote it by $\Bz( \hat{\cI} ) =0$.
If $\Bz( \hat{\cI} ) \neq 0$ and $\zij >0$ for $\exists (i,j) \in \hat{\cI}$,
we denote it by $\Bz( \hat{\cI} ) > 0$.
Other notations such as
$\Bz( \hat{\cI} ) \geq 0, \Bz( \hat{\cI} ) \leq 0 $ 
and $\Bz( \hat{\cI}) < 0$ are defined in the same way.

\section{Markov bases for the change point models}
\label{sec:change}


In this section we derive Markov bases for
the configuration arising from the change point model
of two-way contingency tables
and discuss its algebraic properties.

\subsection{The unique minimal Markov bases for the change point models}

In this subsection we derive the unique minimal Markov bases for
the change point models.
Two-way change point models with an unknown change point 
is introduced in \citet{hirotsu-1997}.
We consider the two-way change point models 
with several fixed change points.
We call a subtable $S \subseteq \cI$ a rectangle in $\cI$,
if $S$ has a form
\[
	S=\{ (i,j) \mid a_1 \leq i \leq a_2, b_1\leq j \leq b_2 \}
\]
for $1 \leq a_1 < a_2 \leq R$ and $1 \leq b_1 < b_2 \leq C$.
Let $S_n, n=1,\ldots, N$, be rectangles of $\cI$ satisfying
$S_1 \subset S_2 \subset \cdots \subset S_N \subset \cI$.
Then the change point model is defined by
\begin{eqnarray} \label{eqn:change}
	\log \pij = \mu + \alpha _i + \beta _j 
   +\sumnN \gamma_n I_{S_n}(i,j) ,
\end{eqnarray}
where $ I_{S_n}(i,j)=1$ if $(i,j) \in {S_n}$ and $I_{S_n}(i,j) =0$ otherwise.
The sufficient statistic for this model consists 
of the row sums, the column sums and the sums of frequencies 
in $S_n, n=1,\ldots,N$.

The first main result of this paper is stated as follows.
%
%
\begin{theorem} \label{th:markov_change}
Let $S_n, n=1,\ldots, N$, be the rectangles of $\cI$
with $S_1 \subset S_2 \subset \cdots \subset S_N \subset \cI$.
The set of basic moves in $\msns$ is the unique minimal Markov basis
for the configuration $\asns$.
\end{theorem}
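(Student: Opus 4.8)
The plan is to establish the statement in two parts: first that the set $\cB$ of basic moves lying in $\msns$ is a Markov basis for $\asns$, via the distance-reduction criterion recorded at the end of Section~\ref{subsec:notations}, and then that every element of $\cB$ is indispensable. The second part upgrades ``Markov basis'' to ``unique minimal Markov basis'' by the standard fact that indispensable moves are contained in every Markov basis.

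Before either part I would record exactly which basic moves belong to $\msns$. Since the $S_n$ are nested rectangles, their row-intervals are nested and their column-intervals are nested, so each row $i$ may be assigned a type $r(i)=\min\{n : i \text{ lies in the row-interval of } S_n\}$ (with $r(i)=N+1$ if $i$ lies in none), and likewise a type $c(j)$ for each column. Then $I_{S_n}(i,j)=1$ precisely when $\max(r(i),c(j))\le n$, and a short computation shows that the basic move $(i,j)(\ip,\jp)-(\ip,j)(i,\jp)$ changes $\xSn$ by $(\mathbf 1[r(i)\le n]-\mathbf 1[r(\ip)\le n])(\mathbf 1[c(j)\le n]-\mathbf 1[c(\jp)\le n])$. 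Hence the move lies in $\msns$ iff the integer intervals $[\min(r(i),r(\ip)),\max(r(i),r(\ip)))$ and $[\min(c(j),c(\jp)),\max(c(j),c(\jp)))$ are disjoint; in particular every basic move whose two rows share a type, or whose two columns share a type, is automatically in $\cB$.

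For the Markov-basis part I would take $\Bx,\By\in\cFt$ with $\Bx\ne\By$, set $\Bz=\Bx-\By$, and show $\|\Bx-\By\|_1$ can be reduced by $\cB$, organizing the reduction by type. In an intra-type stage I use the always-available same-type basic moves to rearrange mass within the blocks cut out by the row- and column-types, reducing to the case where the discrepancy is governed by the type-aggregated difference $\tilde Z$, the $(N+1)\times(N+1)$ array whose $(r,c)$ entry sums $\zij$ over the block of type $(r,c)$; the relations $A\Bz=\mathbf 0$ force $\tilde Z$ to have vanishing row sums, column sums, and ``$\max=n$'' antidiagonal sums, i.e.\ $\tilde Z$ is a move for the corner-anchored nested squares $\{(r,c):\max(r,c)\le n\}$. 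In an inter-type stage I reduce this aggregated move using the admissible cross-type moves, namely those with disjoint type-intervals. This inter-type stage is where I expect the real difficulty. The naive reducing move built from a cell with $\zij>0$, a mate in its row with $\zij<0$, and a mate of the latter in its column may be forbidden exactly when its row-type gap and column-type gap cross, and a telescoping check shows such a crossing move cannot be repaired locally by splitting it along a single intermediate row or column, since one sub-move always recrosses the gap. The repair must therefore be global: one uses the vanishing antidiagonal sums of $\tilde Z$ to locate, along an extended alternating cycle of nonzero entries, a pair of rows and a pair of columns with disjoint type-intervals at which the move both reduces the $l_1$-norm and preserves nonnegativity. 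Showing that a nonzero move with vanishing subtable sums is never stuck, by exploiting the total order on types, is the crux, which I would settle by a descent argument on a minimal configuration of $\tilde Z$.

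Finally, for minimality and uniqueness I would verify that each element of $\cB$ is indispensable. For a basic move $\Bz$, the table $\Bz^+$ has every row sum and column sum equal to $0$ or $1$, so any member of its fiber is supported on the two rows and two columns carrying its mass; there are only two nonnegative integer tables with those unit margins, namely $\Bz^+$ and $\Bz^-$, and both have equal subtable sums because $\Bz\in\ma$. Thus the fiber is $\{\Bz^+,\Bz^-\}$ and $\Bz$ is indispensable. Since indispensable moves lie in every Markov basis, $\cB$ is contained in every Markov basis; as $\cB$ is itself a Markov basis, it is the unique minimal one, which proves the theorem.
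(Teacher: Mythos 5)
Your reduction-to-types bookkeeping is correct (a basic move lies in $\msns$ exactly when its row-type interval and column-type interval are disjoint), and your final paragraph on indispensability is complete and correct: the fiber of $\Bz^+$ for a basic move $\Bz \in \msns$ is $\{\Bz^+,\Bz^-\}$, so every such move lies in every Markov basis, which yields uniqueness and minimality once the Markov-basis property is established. This matches the closing step of the paper's proof.

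However, the Markov-basis property itself --- the actual content of the theorem --- is not proved in your proposal. Your plan splits the reduction into an intra-type stage and an inter-type stage, and you yourself identify the inter-type stage as ``the crux,'' which you then defer to ``a descent argument on a minimal configuration of $\tilde{Z}$'' that is never carried out. Asserting that one can ``locate, along an extended alternating cycle of nonzero entries, a pair of rows and a pair of columns with disjoint type-intervals at which the move both reduces the $l_1$-norm and preserves nonnegativity'' is a restatement of what must be shown, not an argument; nothing in the proposal rules out the possibility that every admissible basic move either fails to decrease $\normz$ or violates nonnegativity of the intermediate tables. The intra-type stage is also left vague: reducing within a block generally requires moves whose other two cells lie outside the block, and one must check fiber membership by applying each move to $\Bx$ or to $\By$ at cells whose sign in $\Bz$ guarantees positivity. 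For comparison, the paper closes exactly this gap by a direct case analysis: after reordering rows and columns so that all $S_n$ share their upper-left corner, it inducts on the smallest $n$ with $\Bz(S_n) \neq 0$ (with separate treatment of $\Bz(S_N)=0$), and in every sign configuration exhibits an explicit reduction by one or two basic moves anchored at negative cells supplied by the vanishing row and column sums of $\Bz$. Incidentally, this also contradicts your heuristic claim that a ``crossing'' move ``cannot be repaired locally'': in the paper every repair is a sequence of at most two basic moves, chosen with different supporting cells rather than by splitting the forbidden move along an intermediate row or column. As it stands, your proposal proves minimality and uniqueness of something not yet shown to be a Markov basis.
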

\begin{proof}
After an appropriate interchange of rows and columns,
we may assume that $S_n, n=1,\ldots, N$,  are the subsets of $\cI$ defined by
\[
	S_n = \{ (i,j) \mid 1 \leq i \leq r_n, 1\leq j \leq c_n \} , \qquad n =1,\ldots,N,
\]
where each $r_n , c _n, n=1,\ldots ,N$, is a positive integer 
with $1 < r_1\leq \cdots \leq r_N \leq R$ and $1<c_1\leq \cdots \leq c_N \leq C$.
Since $S_1 \subset S_2 \subset \cdots \subset S_N \subset \cI$,
at least one of $r_n < r_{n+1}$ or $c_n < c_{n+1}$ holds for each $n=1,\ldots ,N-1$
and at least one  of $r_N<R$ or $r_N<C$ holds.

Suppose $\Bz(S_1) \neq 0$.
Then $\Bz$ contains both of positive cell and negative cell in $S_1$.
Denote these cells by $(i, j)$ and $(\ip, \jp)$.
If $j=\jp$, letting $(i, \jdp)$ be a negative cell in the $i$-th row,
 $\normz$ can be reduced 
by a basic move $(i,j) (\ip , \jdp) - (\ip , j) (i, \jdp) \in \msns$.
Let us consider the case of $i \neq \ip$ and $j \neq \jp$.
Let $(i, \jdp)$ and $(\idp, j)$ be negative cells in the $i$-th row
and in the $j$-th column, respectively.
Then $\normz$ can be reduced by a sequence of two basic moves
$(i, \jp) (\ip , \jdp) - (\ip , \jp) (i, \jdp)$
and $(i,j) (\idp , \jp) - (\idp , j) (i, \jp)$.
For the case of $r_N <R$ and $c_N <C$,
it can be shown by the same argument that
if $\Bz$ contains both of positive cell and negative cell
in $\hat{\cI } := \{ (i,j) \mid r_N<i\leq R, c_N<j\leq C \}$,
then $\normz$ can be reduced 
by the set of basic moves in $\msns$.
If $r_N=R$ or $c_N=C$ holds, say $r_N=R$, 
$\Bz$ contains both of positive cell and negative cell in $\cI \setminus S_N$.
Then $\normz$ can be reduced 
by the set of basic moves in $\msns$.

Consider the case of $\Bz(S_1)=0$.
We claim that if $\Bz(S_{n-1})=0$ and $\Bz(S_n) \neq 0$ 
for $1<\exists n\leq N$,
then $\normz$ can be reduced by 
the set of basic moves in $\msns$.
If either  $r_{n-1}=r_n$ or $c_{n-1}=c_n$ holds,
we see that $\normz$ can be reduced
by a basic move in $\msns$.
For the case of $r_{n-1}<r_n$ and $c_{n-1}<c_n$,
let $S_n^{12} = \{ (i,j) \mid 1\leq i \leq r_{n-1}, c_{n-1}<j \leq c_n \}$,
$S_n^{21} = \{ (i,j) \mid r_{n-1}<i \leq r_n, 1\leq j \leq c_{n-1} \}$ and
$S_n^{22} = \{ (i,j) \mid r_{n-1}<i \leq r_n, c_{n-1}<j \leq c_n \}$.
If $\Bz$ contains 
both of positive cell and negative cell
in one of $S_n^{12}$, $S_n^{21}$ or $S_n^{22}$,
it can be similarly shown that $\normz$ can be reduced 
by the set of basic moves in $\msns$.
Then we only need to consider
the case of $\Bz(S_n^{kl}) \geq 0$ or $\Bz(S_n^{kl} ) \leq 0$ for each
$(k,l) \in \{ (1,2), (2,1), (2,2) \}$.
Without loss of generality we can assume $S_n^{12} >0$.
Let $(i,j)$ be a positive cell in $S_n^{12}$
and let $(\ip , j)$ be a negative cell in the $j$-th column.
If $(\ip, j) \in S_n^{22}$, using a negative cell $(i, \jp)$ 
in the $i$-th row,
$\normz$ can be reduced 
by a basic move $(i,j) (\ip , \jp) - (\ip , j) (i, \jp) \in \msns$.
Suppose $(\ip, j) \not\in S_n^{22}$.
There exists a negative cell 
$(\idp, \jdp) \in S_n^{21} \cup S_n^{22}$.
Then $\normz$ can be reduced by a sequence of two basic moves
$(\ip, j) (\idp , \jdp) - (\ip , \jdp) (\idp, j)$
and $(i,j) (\ip , \jp) - (\ip , j) (i, \jp)$.
Therefore the claim is proved.

The remaining part is the case that $\Bz(S_N)=0$ and one of 
$\Bz(\hat{\cI}) \geq 0$ or $\Bz(\hat{\cI}) \leq 0$ holds.
If $\Bz(\hat{\cI}) = 0$, 
$\Bz$ contains a nonzero cell in 
$\{ (i,j) \mid r_N<i \leq R, 1 \leq j \leq c_N \}$
or $\{ (i,j) \mid 1 \leq i \leq r_N, c_N< j \leq C \} $.
It is easy to see that $\normz$ can be reduced 
by a basic move in $\msns$.
Suppose $\Bz(\hat{\cI} )> 0$ and let $(i,j)$ be a positive cell in $\hat{\cI}$.
There exist a negative cell $(i, \jp)$ and a positive cell $(\ip ,\jp)$
with $\ip \neq i$ in $\{ (i,j) \mid r_N<i\leq R, 1 \leq j \leq c_N \}$.
Then $\normz$ can be reduced by a basic move
$(i, j) (\ip , \jp) - (\ip , j) (i, \jp)$.

Since every basic move in $\msns$ is indispensable,
the set of basic moves in $\msns$ is the unique minimal Markov basis
for $\asns$  (see \cite{takemura-aoki-2004aism}).
\end{proof}

\subsection{Algebraic properties of the configuration arising from the change point model}
\label{subsec:algebra}

In this subsection we investigate the algebraic properties of the configuration $\asns$
arising from the change point model.

Let $K$ be a field and 
let $K[\{ u_i \} _{1 \leq i \leq R} \cup \{ v_j \} _{1 \leq j \leq C} \cup \{ w_n \} _{1 \leq n \leq N+1 }]$
be a polynomial ring in $R+C+N$ variables over $K$.
We associate each cell $(i,j) \in S_n \setminus S_{n-1}, 1 \leq n \leq N+1$, to a monomial $u_i v_j w_n$
where $S_0=\emptyset$ and $S_{N+1}=\cI$.
Define $\rsns$ as a semigourp ring generated by those monomials.
Let $K[\Bx ] = K[\{ \xij \} _{(i,j) \in \cI }]$
be a polynomial ring in $RC$ variables over $K$.
Define a surjective map $\pi : K[\Bx ] \rightarrow \rsns$
by $\pi (\xij ) = u_i v_j w_n$ for $1 \leq n \leq N+1$.
Define the toric ideal for the change point model 
as the kernel of $\pi $ and denote it by $\isns$.
See \cite{sturmfels1996} and \cite{CLO-2007} for general facts on toric ideals and their Gr\"{o}bner bases.
From Theorem \ref{th:markov_change}
we already know that the toric ideal $\isns$ is generated by
the quadratic binomials corresponding to basic moves in $\msns$.
Furthermore we have the following Theorem \ref{th:algebra}.
Although its proof is similar to \cite{ohsugi-hibi-subtable},
we need to use a lexicographic order different from the order used in \cite{ohsugi-hibi-subtable}.
In fact the toric ideal $\isns$ does not have a quadratic Gr\"{o}bner basis with respect
to the lexicographic order used  in \cite{ohsugi-hibi-subtable}
if $N \geq 2$ and there exist $m,n$ such that $2 \leq m<n \leq N+1$ and 
$r_{m-1}<r_m, c_{m-1}<c_m, r_{n-1}<r_n, c_{n-1}<c_n$ where $r_{N+1}=R$ and $c_{N+1}=C$.
\begin{theorem}\label{th:algebra}
For the toric ideal $\isns$
the following statements hold:
\begin{enumerate}
\setlength{\itemsep}{0pt}
\renewcommand{\labelenumi}{(\roman{enumi})}
  \item   $\isns$ possesses a quadratic Gr\"{o}bner basis;
  \item   $\isns$ possesses a square-free initial ideal;
  \item  $\rsns$ is normal;
  \item   $\rsns$ is Koszul.
\end{enumerate}
\end{theorem}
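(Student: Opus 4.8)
The plan is to establish all four properties through a single lexicographic term order and a careful analysis of the resulting initial ideal. The key observation, already flagged in the excerpt, is that the order used in \cite{ohsugi-hibi-subtable} fails for the nested configuration when $N \geq 2$, so the first task is to \emph{construct the right lexicographic order} on $K[\Bx]$. I would order the variables $\xij$ so that the block structure $S_1 \subset S_2 \subset \cdots \subset S_N \subset \cI$ is respected: roughly, cells should be ranked according to which ``ring'' $S_n \setminus S_{n-1}$ they lie in, with a consistent secondary ordering by rows and columns inside each ring. The precise choice must be tuned so that for every basic move, the initial term (the product $\xij \xipjp$ of the two positive cells, say) is chosen to make the leading monomials generate a well-behaved ideal.

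Next I would prove part (i) directly. Since Theorem \ref{th:markov_change} already gives that $\isns$ is generated by the quadratic binomials corresponding to the basic moves in $\msns$, the natural route is to show that this generating set is in fact a Gr\"{o}bner basis with respect to the chosen lexicographic order. By Buchberger's criterion it suffices to verify that every $S$-polynomial of two basic-move binomials reduces to zero. Here the argument parallels \cite{ohsugi-hibi-subtable}: one enumerates the ways two basic moves can share a variable (share a row, share a column, or overlap in a single cell), forms the $S$-polynomial in each case, and exhibits a reduction using other basic moves. The nested rectangle structure is what keeps this enumeration finite and the reductions quadratic. Once the quadratic binomials form a Gr\"{o}bner basis, part (i) follows immediately.

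Parts (ii)--(iv) then follow from (i) by standard implications, and I would invoke the appropriate theorems rather than reprove them. Part (ii), a square-free initial ideal, comes from checking that the leading monomials of the Gr\"{o}bner basis elements are square-free, which holds because each basic-move binomial has leading term equal to a product of two \emph{distinct} variables $\xij \xipjp$. Part (iii), normality of $\rsns$, follows from a theorem of Sturmfels: if a toric ideal possesses a square-free initial ideal then the associated semigroup ring is normal. Part (iv), the Koszul property, follows from the general fact that a semigroup ring whose toric ideal has a quadratic Gr\"{o}bner basis is Koszul. Thus once (i) and the square-free-ness in (ii) are established, (iii) and (iv) are immediate citations.

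The main obstacle I anticipate is precisely the construction and verification in part (i): choosing the lexicographic order so that the quadratic generators form a Gr\"{o}bner basis, and then checking that all $S$-polynomials reduce. The delicacy is that the order must simultaneously accommodate all the nested blocks $S_n$, and the excerpt explicitly warns that a careless choice (the one from \cite{ohsugi-hibi-subtable}) fails as soon as two distinct indices $m < n$ both have strict inclusions in \emph{both} coordinates, i.e. $r_{m-1} < r_m$, $c_{m-1} < c_m$, $r_{n-1} < r_n$, $c_{n-1} < c_n$. Getting the term order right in that situation, and confirming that the quadratic reductions still close up across block boundaries, is the crux of the argument; everything after that is machinery.
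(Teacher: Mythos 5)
Your overall architecture agrees with the paper's: reduce everything to (i), prove (i) by showing that the basic-move binomials from Theorem \ref{th:markov_change} already form a Gr\"{o}bner basis with respect to a suitable lexicographic order via Buchberger's criterion, and then obtain (ii)--(iv) by the standard implications you cite (square-free quadratic leading terms give a square-free initial ideal, Sturmfels' theorem gives normality, and a quadratic Gr\"{o}bner basis gives Koszulness). Those reductions are fine and match the paper. The genuine gap sits exactly where you yourself locate the crux: you never construct the term order and never carry out the $S$-polynomial verification, so part (i) --- and hence the whole theorem --- is not actually proved; what you have is a plan.

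Moreover, the direction you hint at for the order points the wrong way. You propose ranking variables ``according to which ring $S_n \setminus S_{n-1}$ they lie in,'' i.e.\ a block-adapted lexicographic order. That is precisely the natural generalization of the order from the Ohsugi--Hibi subtable paper, and the paper explicitly notes that $\isns$ has \emph{no} quadratic Gr\"{o}bner basis with respect to that order as soon as $N \geq 2$ and two distinct blocks have strict inclusions in both coordinates --- exactly the situation you single out as delicate. The paper's resolution goes the opposite way: after arranging the nested rectangles to share their upper-left corners, it takes the purely positional order $x_{RC} \succ x_{R,C-1} \succ \cdots \succ x_{R1} \succ x_{R-1,C} \succ \cdots \succ x_{11}$, which ignores the ring structure entirely. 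The block structure enters only through a combinatorial criterion for membership in the generating set $\cG$: the binomial $\xik\xjl - \xil\xjk$ (with $i<j$, $k<l$) fails to lie in $\cG$ exactly when some $S_n$ satisfies $(i,k) \in S_n \setminus S_{n-1}$ while $(i,l), (j,k), (j,l) \notin S_n$. With this order, any $S$-polynomial that does not reduce to zero must be a cubic binomial whose two monomials share no variable; there are six possible shapes for such a binomial, and each is ruled out by the membership criterion, using the nestedness of the $S_n$. None of this case analysis appears, even in outline, in your proposal, and without it (or a verified alternative order) the claimed Gr\"{o}bner basis property is unsupported.
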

\begin{proof}
Generally, (i) $\Rightarrow$ (iv) and (ii) $\Rightarrow$ (iii) hold.
Since $\rsns$ is generated by the monomials of the same degree,
(i) $\Rightarrow$ (ii) holds from the proof of Proposition 1.6 in \citet{ohsugi-hibi-1999ja}.
Therefore it suffices to show that the statement (i) holds.

By an appropriate interchange of rows and columns,
we may assume that 
$S_n, n=1,\ldots ,N$, share their upper-left corners.
From Theorem \ref{th:markov_change} the toric ideal $\isns$
 is generated by 
\[
	\cG = \{ \xik \xjl - \xil \xjk \mid 1 \leq i < j \leq R, 1 \leq k < l \leq C, 
	\pi ( \xik \xjl ) = \pi ( \xil \xjk ) \}.
\]
Fix a lexicographic order $\succ $ satisfying
$x_{RC} \succ x_{RC-1} \succ \cdots \succ x_{R1} \succ x_{R-1C} \succ \cdots \succ x_{11}$.
Then $\xik \xjl$ is the initial monomial of 
$\xik \xjl - \xil \xjk , 1 \leq i < j \leq R, 1 \leq k < l \leq C$.
We prove that
$\cG$ is a Gr\"{o}bner basis of $\isns$ with respect to $\succ$
using Buchberger's criterion.

Let $f$ be the $S$-polynomial of $g_1, g_2 \in \cG$.
Suppose that $f$ is not reduced to zero by $\cG$.
By Proposition 4 in Section 9 of Chapter 2 in \cite{CLO-2007},
the initial monomials of $g_1$ and $g_2$ are not relatively prime.
On the other hand,  if the monomials of $f$ share a common variable 
$f$ is reduced to zero by $\cG$.
Then $f$ is a cubic binomial and is represented as
$f = x_{i_1l_1} x_{i_2l_2} x_{i_3l_3} - 
x_{i_1^{\prime}l_1^{\prime}} x_{i_2^{\prime}l_2^{\prime}} x_{i_3^{\prime}l_3^{\prime}}$
with the initial monomial $x_{i_1l_1} x_{i_2l_2} x_{i_3l_3}$.
Since $f \in \isns$, we have $\{ i_1, i_2, i_3 \} = \{ i_1^{\prime}, i_2^{\prime}, i_3^{\prime} \}$
and $\{ l_1, l_2, l_3 \} = \{ l_1^{\prime}, l_2^{\prime}, l_3^{\prime} \}$.
Since the monomials of $f$ have no common variable,
 $| \{ i_1, i_2, i_3 \} | = | \{ l_1, l_2, l_3 \} | = 3$.
We assume $1 \leq i_1 = i_1^{\prime} < i_2 = i_2^{\prime} < i_3 = i_3^{\prime} \leq R$
without loss of generality.
By the definition of $\succ$, $l_3 > l_3^{\prime} \in \{ l_1, l_2 \}$.
Then $f$ is represented as one of the following forms:
\begin{enumerate}
\setlength{\itemsep}{0pt}
\renewcommand{\labelenumi}{(\arabic{enumi})}
  \item  $x_{i_1j_1} x_{i_2j_2} x_{i_3j_3} - x_{i_1j_2} x_{i_2j_3} x_{i_3j_1}$,
  \item  $x_{i_1j_1} x_{i_2j_2} x_{i_3j_3} - x_{i_1j_3} x_{i_2j_1} x_{i_3j_2}$,
  \item  $x_{i_1j_1} x_{i_2j_3} x_{i_3j_2} - x_{i_1j_3} x_{i_2j_2} x_{i_3j_1}$,
  \item  $x_{i_1j_2} x_{i_2j_1} x_{i_3j_3} - x_{i_1j_1} x_{i_2j_3} x_{i_3j_2}$,
  \item  $x_{i_1j_2} x_{i_2j_1} x_{i_3j_3} - x_{i_1j_3} x_{i_2j_2} x_{i_3j_1}$,
  \item  $x_{i_1j_3} x_{i_2j_1} x_{i_3j_2} - x_{i_1j_2} x_{i_2j_3} x_{i_3j_1}$,
\end{enumerate}
where 
$1 \leq i_1 < i_2 < i_3 \leq R$ and $1 \leq j_1 < j_2 < j_3 \leq C$.
The candidates (1)--(6) of the form of $f$ are obtained as follows:
Suppose $l_1 < l_2 < l_3$ and $l_3^{\prime} = l_1$.
By $l_2 \neq l_2^{\prime}$ we have $(l_1^{\prime}, l_2^{\prime}) = (l_2, l_3)$.
This implies that $f$ corresponds to the type (1).
The forms (2)--(5) are obtained by the same argument.

For each form of (1)--(6), if there exists a quadratic binomial 
$\hat{f}$ such that $in_{\prec} (\hat{f})$ divides $in_{\prec} (f)$, then
$f$ can be reduced to a cubic binomial whose two monomials
share a common variable.
Hence such $\hat{f}$ does not belong to $\cG$.
We derive a contradiction for each form of $f$. 
Note that $\xik \xjl - \xil\xjk \not\in \cG$, 
$1 \leq i < j \leq R, 1 \leq k < l \leq C$,
 is equivalent
to the existence of $n, 1 \leq n \leq N$, such that 
$(i,k) \in S_n \setminus S_{n-1}$ and $(i,l), (j,k), (j,l) \not\in S_n$.
We refer to this equivalence by ($*$).
\begin{description}
\setlength{\itemsep}{0pt}
\item{(1)} Consider a quadratic binomial 
$\hat{f} = x_{i_1j_1} x_{i_2j_2} - x_{i_1j_2} x_{i_2j_1}$
and let $S_n \setminus S_{n-1}$ be a subtable containing $(i_1, j_1)$.
Since $\hat{f} \not\in \cG$ and ($*$),
$(i_1,j_2), (i_2,j_3), (i_3,j_1) \not\in S_n$.
This contradicts $f \in \isns$. 
\item{(2)} Consider a quadratic binomial 
$\hat{f} = x_{i_1j_1} x_{i_2j_2} - x_{i_1j_2} x_{i_2j_1}$
and let $S_n \setminus S_{n-1}$ be a subtable containing $(i_1, j_1)$.
Since $\hat{f} \not\in \cG$ and ($*$),
$(i_1,j_3), (i_2,j_1), (i_3,j_2) \not\in S_n$.
This contradicts  $f \in \isns$. 
\item{(3)} Consider two quadratic binomials 
$\hat{f}_1 = x_{i_1j_1} x_{i_2j_3} - x_{i_1j_3} x_{i_2j_1}$
and $\hat{f}_2 = x_{i_1j_1} x_{i_3j_2} - x_{i_1j_2} x_{i_3j_1}$.
Let $S_n \setminus S_{n-1}$ be a subtable containing $(i_1, j_1)$.
Since $\hat{f}_1 \not\in \cG$ and ($*$),
$(i_2,j_1) \not\in S_n$.
Since $\hat{f}_2 \not\in \cG$ and ($*$),
$(i_1,j_2) \not\in S_n$.
Then $(i_1,j_3), (i_2,j_2), (i_3,j_1) \not\in S_n$,
which contradicts $f \in \isns$. 
\item{(4)} Consider two quadratic binomials 
$\hat{f}_1 = x_{i_2j_1} x_{i_3j_3} - x_{i_2j_3} x_{i_3j_1}$
and $\hat{f}_2 = x_{i_1j_2} x_{i_3j_3} - x_{i_1j_3} x_{i_3j_2}$.
Let $S_n \setminus S_{n-1}$ be a subtable containing $(i_1, j_1)$.
Since $\hat{f}_1 \not\in \cG$, ($*$) and $f \in \isns$,
$(i_2,j_1) \in S_n \setminus S_{n-1}$.
Similarly $(i_1,j_2) \in S_n \setminus S_{n-1}$ follows from 
$\hat{f}_2 \not\in \cG$, ($*$) and $f \in \isns$.
These contradict $f \in \isns$. 
\item{(5)} Consider two quadratic binomials
$\hat{f}_1 = x_{i_2j_1} x_{i_3j_3} - x_{i_2j_3} x_{i_3j_1}$ and 
$\hat{f}_2 = x_{i_1j_2} x_{i_3j_3} - x_{i_1j_3} x_{i_3j_2}$.
Let $S_n \setminus S_{n-1}$ be a subtable containing $(i_2, j_1)$.
Since $\hat{f}_1 \not\in \cG$ and ($*$),
$(i_2,j_3), (i_3,j_1) \not\in S_n$ and $(i_1, j_3) \not\in S_n$.
Since $f \in \cG$, $S_n \setminus S_{n-1}$ contains $(i_2,j_2)$.
Similarly $(i_1,j_2) \in S_n \setminus S_{n-1}$ follows from 
$\hat{f}_2 \not\in \cG$, ($*$) and $f \in \isns$.
This contradicts $f \in \isns$. 
\item{(6)} Consider a quadratic binomial 
$\hat{f} = x_{i_2j_1} x_{i_3j_2} - x_{i_2j_2} x_{i_3j_1}$
and let $S_n \setminus S_{n-1}$ be a subtable containing $(i_2, j_1)$.
Since $\hat{f} \not\in \cG$ and ($*$),
$(i_1,j_2), (i_2,j_3), (i_3,j_1) \not\in S_n$.
This contradicts $f \in \isns$. 
\end{description}
Therefore $\cG$ is a Gr\"{o}bner basis of $\isns$ with respect to $\succ$.
\end{proof}

\section{Markov bases for common block diagonal effect models}
\label{sec:common}

In this section we introduce the common diagonal effect model
of two-way contingency tables and derive its Markov basis.

Let $S$ denote the set of cells belonging to the diagonal blocks defined by
\[
	S=\{ (i,j) \mid r _n \leq i < r _{n+1}, 
	c _n \leq j < c _{n+1} , 1\leq \exists n \leq N\} ,
\]
where each $r_n , c _n, n=1,\ldots ,N+1, $ is a non-negative integer with
$1= \rsn =R+1$ and $1=\csn =C+1$.
$S$ is an $N\times N$ block diagonal set in the contingency table.
In the common block diagonal effect model, the cell probabilities
$\{ \pij \}$ are defined by
\begin{eqnarray} \label{eqn:common}
	\log \pij = \mu + \alpha _i + \beta _j +\gamma _S I_S(i,j) .
\end{eqnarray}
In the model \eq{eqn:common}, all cells in diagonal blocks have 
the same parameter $\gamma _S$.
The sufficient statistic for 
\eq{eqn:common} consists
of the row sums, the column sums and the sum of frequencies in $S$.
Note that the model \eq{eqn:common} is a generalization 
of the common diagonal effect model
whose Markov basis is discussed in \citet{common-diagonal}.

Since \citet{subtable} showed that for  $N=2$
the set of basic moves in $\ms$ is a Markov basis for $\as$,
we assume $N \geq 3$ in this section.
In order to describe a Markov basis for $\as$ 
we need some more notations.
\begin{figure}[!h]
\begin{center}
\includegraphics[width=4.5cm]{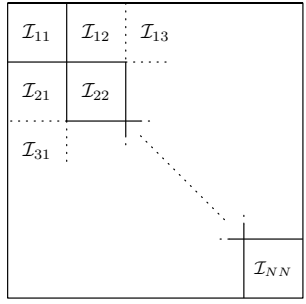}
\vspace{-2mm}
\figcaption{Block-wise indices.}
\label{fig:block-index}
\end{center}
\end{figure}
We index each block as in \figref{fig:block-index},
i.e., 
$\cikl = \{ (i, j) \mid r _k \leq i < r _{k+1}, c _l \leq j < c _{l+1} \}$
for $1 \leq k,l \leq N$.
Note that $S$ can be represented as 
$S=\cI_{11} \cup \cI_{22} \cup \cdots \cup \cI_{NN}$.

Consider the following types of moves.  
\begin{itemize}
\setlength{\itemsep}{0pt}
\item Type I (square-free moves of degree 2):
\[
	\begin{matrix}
			& j_1	& j_2 \\
		i_1	& +1	& -1	\\
		i_2	& -1	& +1
	\end{matrix}
\]
where $i_1 \neq i_2$ and $j_1 \neq j_2$.
\item Type II (square-free moves of degree 3):
\[
	\begin{matrix}
			& j_1	& j_2 	& j_3	\\
		i_1	& 0		& +1	& -1	\\
		i_2	& -1	& 0		& +1	\\
		i_3	& +1	& -1	& 0
	\end{matrix}
\]
where nonzero cells (i.e.\ $\pm 1$)  belong to distinct blocks in $S^C$.
\item Type III (square-free moves of degree 3):
\[
	\begin{matrix}
			& j_1	& j_2 	& j_3	\\
		i_1	& +1	& 0		& -1	\\
		i_2	& 0		& -1	& +1	\\
		i_3	& -1	& +1	& 0
	\end{matrix}
\]
where $(i_1, j_1)$ and $(i_2, j_2)$ belong to distinct  blocks in $S$
and other nonzero cells belong to distinct blocks in $S^C$.
\item Type IV (moves of degree 4):
\[
	\begin{matrix}
			& j_1	& j_2 	& j_3	&j_4	\\
		i_1	& +1	& 0		& -1	& 0		\\
		i_2	& 0		& +1	& 0		& -1	\\
		i_3	& 0		& -1	& +1	& 0		\\
		i_4	& -1	& 0		& 0		& +1
	\end{matrix}
\]
where $(i_1, j_1)$ and $(i_3, j_2)$ belong to distinct   blocks in $S$
and other nonzero cells belong to (not necessarily distinct) 
blocks in $S^C$.
The $i_1$-th and $i_2$-th rows belong to the same block of rows.
Similarly the $i_3$-th and $i_4$-th rows belong to the same block of rows.
There are both square-free and non-square-free moves of this type.
Type IV includes the transpose of these moves.
\end{itemize}

We now give a Markov basis for $\as$ with its explicit form as follows.

%
%
\begin{theorem} \label{th:markov-common}
The set of moves of Types I--VI in $\ms$ forms 
a Markov basis for the configuration $\as$. 
\end{theorem}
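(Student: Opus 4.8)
The plan is to invoke the reduction criterion established at the end of Section~\ref{subsec:notations}: it suffices to show that for any two tables $\Bx,\By$ in a common fiber $\cFt$ with $\Bz=\Bx-\By\neq\Bzero$, the distance $\normz$ can be reduced by the moves of Types I--VI. Recall that $\Bz$ is a move for $\as$, so every row sum and every column sum of $\Bz$ vanishes and, crucially, $\sumijS \zij = 0$; this last constraint is exactly what obstructs an arbitrary basic move, since a Type~I move whose positive and negative cells have unbalanced membership in $S$ changes the subtable sum $\xS$ and is therefore inadmissible. Throughout I would work with the block indexing $\cikl$ and track, for each block, whether $\Bz$ restricted to it satisfies $\Bz(\cikl)>0$, $\Bz(\cikl)<0$, or $\Bz(\cikl)=0$.

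First I would dispose of the local reductions. If some block $\cikl$ contains both a positive and a negative cell of $\Bz$, or more generally if there is an admissible basic move connecting a positive and a negative cell, then a single Type~I move already reduces $\normz$. The membership computation shows that a basic move preserves $\xS$ automatically whenever its two rows lie in a common row block or its two columns lie in a common column block; admissibility can fail only when the rows and columns straddle different blocks so that an unbalanced number of the four cells falls in $S$. After exhausting these easy reductions I may assume that $\Bz$ has a block--constant sign on each $\cikl$ and that the surviving positive and negative masses sit in diagonal blocks that no admissible basic move can balance.

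The core of the argument is then the transport of mass between distinct diagonal blocks $\cikk$ and $\cill$ without disturbing $\xS$. This is precisely the role of the higher moves: a Type~II move recombines three off--diagonal ($S^C$) cells and so never touches $\xS$; a Type~III move carries a unit between two diagonal blocks using two off--diagonal cells as a relay, and its membership check gives $\Delta\xS=+1-1=0$; the degree--four Type~IV moves (and, for the residual configurations, Types~V and~VI, one of which must be non--square--free) handle the cases where the relay cells are forced to share a row block or a column block. For a given block--constant $\Bz$ I would select a positive diagonal cell and a negative diagonal cell, read off from the vanishing of the intervening row and column sums which off--diagonal cells of the required sign must be present, and exhibit the one move among Types~II--VI whose support matches that configuration; adding it strictly decreases $\normz$.

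The main obstacle is the exhaustive case analysis in this last step. Unlike the nested rectangles of Theorem~\ref{th:markov_change}, the diagonal blocks here are mutually incomparable, so the relay from one diagonal block to another can be routed through several distinct off--diagonal patterns, and one must verify that \emph{some} available move of Type~II--VI always fits---especially in the constrained situations where the relay cells are forced into the same row block or column block and only the degree--four moves apply. Checking that Types~I--VI collectively cover every block--constant sign pattern, leaving no configuration unreducible, is the delicate bookkeeping that constitutes the bulk of the proof.
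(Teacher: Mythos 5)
Your high-level plan coincides with the paper's: first use Type I moves to eliminate any block $\cikl$ containing cells of both signs (this is the paper's Lemma \ref{lemma:p-and-m}), then, for block-wise sign-constant $\Bz$, use the higher-degree moves to transport mass between diagonal blocks while preserving $\xS$. But the proposal stops exactly where the real proof has to start. The entire content of the theorem is the verification that the listed move types suffice for \emph{every} surviving sign pattern, and you explicitly defer this (``the delicate bookkeeping that constitutes the bulk of the proof'') rather than carry it out. The paper does that work in three concrete steps: Lemma \ref{lemma:szero} treats the case $\Bz(S)=0$, where all the mass is off-diagonal --- a case your framing ``the surviving positive and negative masses sit in diagonal blocks'' overlooks --- and produces a Type I or II move; Lemma \ref{lemma:deg3} exhibits a Type III move whenever some positive cell $(i,j)\in\cikk$ has nonnegative column restricted to the rows of a negative block $\cill$ and nonnegative row restricted to its columns; Lemma \ref{lemma:deg3or4} exhibits a Type III or IV move when that hypothesis fails but $\Bz(\cikl)>0$ or $\Bz(\cilk)>0$; and the concluding argument splits the remaining block-sign configurations into two cases, each resolved by Lemma \ref{lemma:deg3} or its sign-reverse. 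None of these constructions, nor the zero-row-sum and zero-column-sum arguments that guarantee the required relay cells actually exist, appears in your text, so nothing has been proved.

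A second, more basic defect: your argument invokes moves of ``Types V and VI'' to handle the ``residual configurations''. No such moves exist --- the paper defines only Types I--IV, and the ``I--VI'' in the theorem statement is evidently a typo for ``I--IV'' (the paper's own proof reduces everything to moves of Types I--IV). A Markov-basis proof must work with a fixed, explicitly described finite set of moves; delegating the hard cases to unspecified extra move types means that even if your sketch were completed, the statement established would not be the one asserted. The gap, concretely, is the case analysis supplied by Lemmas \ref{lemma:szero}--\ref{lemma:deg3or4} and the final two-case argument of the paper.
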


We establish Theorem \ref{th:markov-common}
by the lemmas below.
Suppose that $\Bx$ and $\By$ belong to the same fiber $\cFt$
and let $\Bz = \Bx - \By$. 
%
The first lemma is proved by the same argument
as Lemma 2 of \cite{subtable} and we omit its proof.
%
%
\begin{lemma} \label{lemma:p-and-m}
Suppose that $\Bz$ contains a block $\cikl$
such that there exist two cells $(i,j), (\ip,\jp) \in \cikl$
with $\zij >0$ and $\zipjp <0$.
Then $\normz$ can be reduced by moves of Type I in $\ms$.
\end{lemma}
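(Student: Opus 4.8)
The plan is to imitate the classical single–move reduction for the complete independence model, where one always lowers $\normz$ by one basic move, while carrying along the single extra constraint that distinguishes the subtable sum problem: every basic move we invoke must lie in $\ms$, i.e.\ must leave the subtable sum $x_S$ invariant. The observation that makes this feasible is structural. Since $(i,j)$ and $(\ip,\jp)$ both lie in $\cikl$, they share the row-block (the rows $i''$ with $r_k\le i''<r_{k+1}$) and the column-block (the columns $j''$ with $c_l\le j''<c_{l+1}$). Moreover $I_S$ is constant on each block, so for any two columns $j,\jp$ lying in one column-block and any two rows $i_1,i_2$ we have $I_S(i_1,j)-I_S(i_1,\jp)=I_S(i_2,j)-I_S(i_2,\jp)=0$; hence the basic move on rows $\{i_1,i_2\}$ and columns $\{j,\jp\}$ leaves $x_S$ unchanged and belongs to $\ms$. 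Symmetrically, any basic move whose two rows lie in one row-block belongs to $\ms$. This is the tool that lets us swap within the block $\cikl$ for free.

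First I would dispose of the cases where the two given cells share a row or a column. If $i=\ip$, then $\zij>0$ and $\zijp<0$ sit in the same row with $j,\jp$ in a common column-block; column $j$ has zero margin, hence a negative cell $(\idp,j)$, and the basic move on rows $\{i,\idp\}$ and columns $\{j,\jp\}$ lowers $|\zij|$, $|\zijp|$ and $|z_{\idp j}|$ while disturbing at most one further cell, so it reduces $\normz$; by the observation it lies in $\ms$. The case $j=\jp$ is symmetric, using a move internal to the common row-block.

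For the main case $i\neq\ip$ and $j\neq\jp$, all four corners $(i,j),(i,\jp),(\ip,j),(\ip,\jp)$ lie in $\cikl$, and I would inspect the anti-diagonal corners. If $\zijp\neq 0$ we obtain a positive/negative pair sharing a line inside $\cikl$ (in row $i$ when $\zijp<0$, in column $\jp$ when $\zijp>0$), and likewise $\zipj\neq 0$ yields such a pair in column $j$ or in row $\ip$; in every such situation the previous case reduces $\normz$ by a move in $\ms$. The only residual configuration is $\zijp=\zipj=0$, which I would treat by a two-basic-move reduction of the kind used in the proof of Theorem~\ref{th:markov_change}: a first move on columns $\{j,\jp\}$ (internal to the column-block, hence in $\ms$) repositions mass so as to create a nonzero anti-diagonal corner, producing a same-line positive/negative pair, and a second such internal move then strictly lowers $\normz$; the composition is the desired reduction. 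Throughout, the intermediate tables remain nonnegative because each move decreases only a cell where $\Bx$ (or $\By$) is strictly positive.

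The part I expect to be the real obstacle is exactly this bookkeeping that forces each move into $\ms$: in the plain independence model one may place the fourth corner of the reducing move anywhere, whereas here the swap must be confined to a single column-block or row-block so that $x_S$ is not perturbed. What rescues the argument is precisely the hypothesis, which hands us a positive and a negative cell inside one common block $\cikl$, so that the required swaps can always be arranged internally—at worst in two steps for the degenerate sign pattern of the anti-diagonal corners. This is the same mechanism as in Lemma~2 of \cite{subtable}, which is why its proof may be omitted here.
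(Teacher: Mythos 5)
Your skeleton --- basic moves whose two rows share a row-block or whose two columns share a column-block automatically preserve $x_S$ and hence lie in $\ms$; an opposite-sign pair of $\cikl$ sharing a line is removed by one such move; the generic pair reduces to the anti-diagonal corner case --- is exactly the argument the paper points to when it cites Lemma~2 of \cite{subtable}, and everything up to the residual case $\zijp=\zipj=0$ is correct (there each move you invoke can be applied on the $\By$ side, since the two cells it decreases carry $\Bz<0$, hence $\By\geq 1$). The gap is the residual case itself, which is where the content of the lemma is concentrated, and your sketch of it fails on both counts that matter. \emph{Strictness:} you never say which rows your ``first move on columns $\{j,\jp\}$'' uses. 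If it is the move on the four corners $\{i,\ip\}\times\{j,\jp\}$, either orientation raises $\normz$ by exactly $2$; and since you may assume (your earlier cases having been disposed of) that no block contains an opposite-sign pair sharing a row or column, every subsequent internal move lowers $\normz$ by at most $2$ --- its fourth cell cannot also decrease, as that would create exactly such a forbidden pair. So that composition nets $0$, which is not a reduction. \emph{Nonnegativity:} your closing principle (``each move decreases only a cell where $\Bx$ or $\By$ is strictly positive'') cannot hold for the first move, because it necessarily alters a cell where $\Bz=0$, and there $\xij=\yij$ may be $0$. Concretely, take $N=3$, row- and column-blocks $\{1,2\},\{3\},\{4\}$, and let $\Bz$ have rows $(1,0,-1,0)$, $(0,-1,0,1)$, $(-1,0,1,0)$, $(0,1,0,-1)$, with $\Bx=\Bz^+$, $\By=\Bz^-$; the hypothesis holds in $\cI_{11}$ with $(i,j)=(1,1)$, $(\ip,\jp)=(2,2)$. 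Here neither the four-corner move nor the move on rows $\{1,3\}$, columns $\{1,2\}$ (the one built from the negative cell of column $j$) can be applied to $\Bx$ or to $\By$ in either orientation: every choice forces a subtraction at a zero entry.

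What repairs it is the precise choice made in the corresponding step of the proof of Theorem~\ref{th:markov_change}, transplanted here. Take a negative cell $(i,\jdp)$ of row $i$ (it exists because the row sums of $\Bz$ vanish and $\zij>0$; note $\jdp\neq j,\jp$), and let the first move be the one with $+1$ at $(\ip,\jp)$ and $(i,\jdp)$ and $-1$ at $(i,\jp)$ and $(\ip,\jdp)$. This move is in $\ms$ because $i$ and $\ip$ share a row-block (not because of its columns); it can be subtracted from $\By$ because the two cells it decreases there, $(\ip,\jp)$ and $(i,\jdp)$, have $\Bz<0$; and it changes $\normz$ by at most $0$ while making $\zijp=-1$. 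Now $(i,j)$ and $(i,\jp)$ are an opposite-sign pair of $\cikl$ sharing row $i$, and your Case~1 move --- rows $\{i,\idp\}$ with $(\idp,j)$ a negative cell of column $j$, columns $\{j,\jp\}$, again applied on the $\By$ side --- lowers $\normz$ by at least $2$, so the composition is a strict reduction; one checks easily that this pair of moves succeeds in the $4\times 4$ example above. In short, the two-move idea is right, but the particular first move you describe is the wrong one, and the feasibility principle you lean on is false for it; pinning down that move is precisely the nontrivial part of this lemma.
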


By this lemma from now on we assume that
every block $\cikl$ in $\Bz$, $1 \leq k,l \leq N$, satisfies
$\Bz(\cikl) \geq 0$ or $\Bz(\cikl) \leq 0$.
Let $K = \{ k \mid 1 \leq k \leq N, \Bz(\cikk) >0 \}$
denote the set of indices of positive diagonal blocks
and let $L = \{ l \mid 1 \leq l \leq N, \Bz(\cill) <0 \}$
denote the set of indices of negative diagonal blocks.

%
%
\begin{lemma} \label{lemma:szero}
If $\Bz(S)=0$, then  $\Bz$ can be reduced by a move of Type I or II in $\ms$.
\end{lemma}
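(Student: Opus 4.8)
The plan is to reduce everything to the behaviour of the ordinary independence model and to treat the subtable constraint as the only obstruction. After Lemma~\ref{lemma:p-and-m} every block of $\Bz$ is single-signed, and the hypothesis $\Bz(S)=0$ forces $\zij=0$ on all diagonal blocks, so $K=L=\emptyset$ and the whole support of $\Bz$ lies in $S^C$. The row and column sums of $\Bz$ vanish, while the subtable sum $\sum_{(i,j)\in S}\zij$ vanishes trivially. Thus $\Bz$ is a nonzero move for the plain independence model, for which basic (Type~I) moves already suffice; the point is to produce a basic move that \emph{additionally} preserves $\xS$, and to fall back on a degree-$3$ detour when no such basic move is available.

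First I would locate the cells for a candidate Type~I move. Pick a positive cell $(i,j)$; by the zero column sum of column $j$ there is a negative cell $(\ip,j)$, and by the zero row sum of row $\ip$ there is a positive cell $(\ip,\jp)$. Since $(i,j)$ and $(\ip,\jp)$ are positive, $\Bx$ has entry $\ge 1$ at both, so the Type~I move on rows $i,\ip$ and columns $j,\jp$, oriented so that it decrements these two positive cells, keeps $\Bx$ in $\N^{|\cI|}$ and lowers $\normz$: the two decremented cells and the incremented cell $(\ip,j)$ each reduce the norm, while the remaining corner $(i,\jp)$ changes it by at most $+1$, for a net decrease. Because $(i,j)$, $(\ip,\jp)$, $(\ip,j)$ all lie off the diagonal, this move changes $\xS$ by exactly $I_S(i,\jp)$, so it belongs to $\ms$ precisely when $(i,\jp)\notin S$, and in that case we are done.

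The remaining case is $(i,\jp)\in S$, where the basic move would alter $\xS$; note $\zij=0$ at $(i,\jp)$ since $\Bz(S)=0$. Here I would convert the degree-$2$ move into a degree-$3$ one that routes around the diagonal. Using the zero sum of row $i$ take a second negative cell $(i,\jdp)$, and using the zero sum of column $\jdp$ take a positive cell $(\idp,\jdp)$. The $3$-cycle on rows $i,\ip,\idp$ and columns $j,\jp,\jdp$ that decrements the three positive cells $(i,j)$, $(\ip,\jp)$, $(\idp,\jdp)$ and increments $(\ip,j)$, $(i,\jdp)$, $(\idp,\jp)$ is feasible on $\Bx$ (all decremented cells are positive) and again strictly lowers $\normz$ by the same sign bookkeeping. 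It preserves $\xS$ and is a genuine Type~II move as long as its last corner $(\idp,\jp)$ is off-diagonal and all six cells occupy distinct blocks of $S^C$.

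Verifying these two conditions is the crux. I would exploit that $S$ is block diagonal, so a cell belongs to $S$ exactly when its block-row index equals its block-column index, and that $(i,\jp)\in S$ together with the off-diagonal choices above forces the block-row indices of $i$ and $\ip$ and the block-column index of $j$ to be pairwise distinct, with the block-column index of $\jp$ equal to the block-row index of $i$. These relations let me check that $\sum_{(i,j)\in S}$ of the $3$-cycle is zero and that its blocks are distinct. The main obstacle is precisely the organisation of this step: the new corner $(\idp,\jp)$ may itself land on the diagonal, so one must either reselect $(\idp,\jdp)$ or perform one further detour, and the case analysis has to be arranged so that in every configuration it terminates with a valid move of Type~I or Type~II. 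Showing that the forced distinctness of block indices always leaves room for such a choice is where the real work lies.
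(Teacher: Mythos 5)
Your overall strategy---try a Type I move first, and fall back on a degree-3 detour when the fourth corner lands in $S$---is the same as the paper's, and the Type I half of your argument is correct. But the proof stops exactly where you say ``the real work lies,'' and that unfinished part is the entire content of the lemma (for the plain independence model there is nothing to prove). Two distinct issues are buried there, and you resolve neither. First, the worry that the corner $(\idp,\jp)$ lies in $S$ can in fact be dismissed by a short argument you do not give: if $(i,\jp)\in S$ and $(\idp,\jp)\in S$, then, since each column-block meets $S$ in exactly one diagonal block, $i$ and $\idp$ lie in the same row-block; but then $(i,\jdp)$ and $(\idp,\jdp)$ lie in a single block carrying both a negative and a positive entry of $\Bz$, contradicting the single-signedness available after Lemma \ref{lemma:p-and-m}. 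Second, and more seriously, the distinct-blocks requirement in the definition of Type II can genuinely fail: nothing prevents the positive cell $(\idp,\jdp)$ you pick in column $\jdp$ from lying in the same row-block as $\ip$, in which case $(\ip,\jp)$ and $(\idp,\jp)$ share a block, and your 3-cycle---although it does lie in $\ms$ and does lower $\normz$---is not of Type II (nor of any of Types I--IV), so it cannot be used to establish the theorem.

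This failure is not hypothetical. Take $N=4$, $R=C=8$, row- and column-blocks $\{1,2\},\{3,4\},\{5,6\},\{7,8\}$, $S$ the four diagonal blocks, and $\Bz$ with nonzero entries $z_{13}=z_{23}=+1$, $z_{35}=+1$, $z_{51}=+1$, $z_{67}=+1$, $z_{17}=z_{25}=-1$, $z_{31}=-1$, $z_{53}=z_{63}=-1$; one checks all row and column sums vanish, $\Bz(S)=0$, and every block is single-signed. Starting from $(i,j)=(1,3)$ with $(\ip,j)=(5,3)$, your procedure is then forced: row $5$ has the single positive cell $(5,1)$, so $\jp=1$ and $(i,\jp)=(1,1)\in S$; row $1$ has the single negative cell $(1,7)$, so $\jdp=7$; column $7$ has the single positive cell $(6,7)$, so $\idp=6$, which shares the row-block of $\ip=5$. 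Your 3-cycle then puts $(5,1)$ and $(6,1)$ in the same block $\cI_{31}$ and is not a Type II move. A repair exists (here a Type I move on rows $1,6$ and columns $3,7$ works, and alternatively choosing $(\ip,j)=(6,3)$ at the second step would have avoided the problem), but showing that such a repair is always available is precisely the block-wise case analysis the paper performs: relabelling so the negative cells sit in $\cI_{13}$ and $\cI_{32}$, splitting on whether $\cI_{33}\subseteq S$, then on whether $\cI_{22}\subseteq S$, with each case assumption guaranteeing that all six cells of the final move occupy six distinct blocks of $S^C$. Until you supply an argument of that kind, the proof has a genuine gap at its crux.
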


\begin{proof}
After an appropriate block-wise interchange of rows and columns,
we assume $\Bz(\cI _{12}) >0$ and $\cI _{11} , \cI_{22} \subseteq S$
without loss of generality.
Let $(i,j)$ be a positive cell in $\cI _{12}$ as shown in \figref{fig:Szero1}.
\begin{figure}[!h]
\begin{center}
\vspace{-2mm}
\includegraphics[]{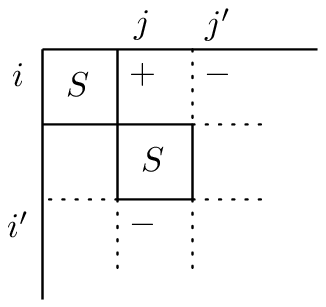}
\vspace{-4mm}
\figcaption{$\Bz$ with $\Bz(S)=0$.}
\label{fig:Szero1}
\vspace{-3mm}
\end{center}
\end{figure}
Then $\Bz$ contains
 negative cells in the $i$-th row and the $j$-th column.
By an appropriate block-wise interchange of rows and columns,
we assume that two of these cells belong to 
$\cI _{13}$ and $\cI _{32}$, respectively.
Note that $\cI _{33}$ may or may not be contained by $S$.
Denote the two negative cells by $(i, \jp)$ and $(\ip ,j)$.
If $\cI _{33} \not\subseteq S$, 
$\normz$ can be reduced by $(i,j) (\ip ,\jp) - (\ip ,j) (i, \jp)$.
Hence let us consider the case of $\cI _{33} \subseteq S$.
Since $z_{+ \jp} =0$,
the $\jp$-th column contains a positive cell.
By an appropriate block-wise interchange of rows,
we assume that this positive cell belongs to $\cI _{23}$ 
as in \figref{fig:Szero2}.
Here, $\cI _{22}$ may or may not be contained by $S$.
Denote the positive cell by $(\idp , \jp)$.
If $\cI _{22} \not\subseteq S$,
$\Bz$ can be reduced by $(i,j) (\idp ,\jp) - (\idp ,j) (i, \jp)$.
If $\cI _{22} \subseteq S$,
there exists a negative cell $(\idp , \jdp) \in \cI _{2 l} , l \neq 2,3$
and $\normz$ can be reduced by
$(i,j) (\ip ,\jdp) (\idp , \jp) - (\ip ,j) (\idp, \jdp) (i, \jp)$.
\end{proof}
\begin{figure}[!h]
\begin{center}
\vspace{-2mm}
\includegraphics[]{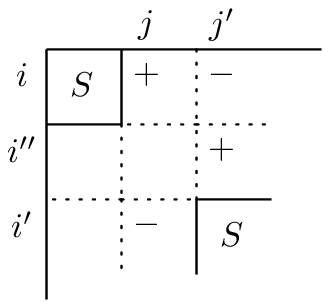}
\vspace{-4mm}
\figcaption{$\Bz$ with $\Bz(S)=0$.}
\label{fig:Szero2}
\vspace{-3mm}
\end{center}
\end{figure}
%
%
\begin{lemma} \label{lemma:deg3}
Suppose that $\Bz(\cikk) >0, \Bz(\cill) <0$ and
$\Bz$ contains a cell $(i,j) \in \cikk$ with $\zij >0$ such that
\begin{eqnarray*} 
	r_l \leq \forall \ip < r_{l+1} : \zipj \geq 0 \quad
	\text{and} \quad
	c_l \leq \forall \jp < c_{l+1} : \zijp \geq 0.
\end{eqnarray*}
Then $\normz$ can be reduced by a move of Type III in $\ms$.
\end{lemma}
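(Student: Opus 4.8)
The plan is to produce a \emph{single} move of Type III and apply it as a $B^-$ move (i.e.\ subtracted from $\By$) so as to lower $\normz$. I would anchor the Type III hexagon at the given positive cell $(i,j)\in\cikk$, using it as the $-1$ diagonal corner that lies in $S$, and I would pick any cell of $\cill$ on which $\Bz<0$ (one exists because $\Bz(\cill)<0$) as the $+1$ diagonal corner, which also lies in $S$. Since $k\neq l$ these two corners automatically lie in distinct diagonal blocks, as Type III requires. The remaining four nonzero cells must be off-diagonal, and the whole difficulty is to place them so that the configuration is genuinely of Type III and so that adding it to $\Bz$ strictly decreases the $\ell_1$-norm.

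First I would locate the two off-diagonal \emph{negative} cells, which become two of the hexagon's $+1$-cells. Because $A\Bz=0$ forces every row and column sum of $\Bz$ to vanish, row $i$ must contain a negative entry. By the standing sign-invariance of blocks (following Lemma \ref{lemma:p-and-m}) the positive cell $(i,j)$ makes $\cikk$ nonnegative, so this negative entry is not in block column $k$; the hypothesis $\zijp\geq0$ for $c_l\leq\jp<c_{l+1}$ excludes block column $l$; hence it lies in some block column $m\notin\{k,l\}$, giving a negative cell in $\cI_{km}\subseteq S^C$. Symmetrically, the nonnegativity of $\cikk$ together with the hypothesis $\zipj\geq0$ for $r_l\leq\ip<r_{l+1}$ forces a negative entry of column $j$ into some block row $p\notin\{k,l\}$, i.e.\ into $\cI_{pk}\subseteq S^C$. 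These two cells share row $i$ and column $j$ with the anchor, exactly the incidence pattern the hexagon needs; together with the $\cill$-corner they determine the last two off-diagonal cells, lying in $\cI_{lm}$ and $\cI_{pl}$.

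With the six cells fixed I would verify that they form a legitimate Type III move: the two diagonal corners are in the distinct blocks $\cill$ and $\cikk$, while the four off-diagonal cells occupy the blocks indexed by $(k,m),(l,m),(p,k),(p,l)$, which are pairwise distinct and all off-diagonal because $k\neq l$ and $m,p\notin\{k,l\}$ (this is where $N\geq3$ enters, guaranteeing such $m,p$ exist). The step I expect to be the main obstacle is the norm reduction itself, since the entries of $\Bz$ on the two off-diagonal cells in $\cI_{lm}$ and $\cI_{pl}$ are not sign-controlled, so the move need not cancel $\Bz$ on all six cells. I would resolve this by applying the move only on the $\By$-side: its three $+1$-cells are precisely the negative cells just found (the $\cill$-corner and the two off-diagonal negatives in $\cI_{km}$ and $\cI_{pk}$), so subtracting the move keeps $\By$ nonnegative, and a direct count shows that these three cells together with the positive anchor each lower $\normz$ by one while the two uncontrolled cells raise it by at most one apiece, for a net strict decrease of at least two. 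This shows $\normz$ can be reduced by a single move of Type III, completing the proof.
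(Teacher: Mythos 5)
Your proof is correct and takes essentially the same approach as the paper: the paper picks exactly the same six cells (the positive anchor $(i,j)\in\cikk$, a negative cell of $\cill$, negative cells in row $i$ and column $j$ which the hypothesis and the standing block-sign assumption force into block rows/columns outside $\{k,l\}$, plus the two completing off-diagonal cells) and reduces $\normz$ by precisely this Type III move, differing from yours only by an overall sign, which is immaterial since the set of moves is sign-invariant. Your explicit feasibility check on the $\By$ side and the count of a net decrease of at least $4-2=2$ just make precise what the paper asserts implicitly.
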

\begin{proof}
After an appropriate block-wise interchange of rows and columns,
we assume that $k=$ and $l=2$.
Let $(\ip ,\jp)$ be a negative cell in $\cill$.
Since $\zij >0$ and $\zip = \zpj = 0$,
there exist two positive cells $(\idp , j), (i , \jdp)$
with $(\idp , j) \not\in \cikk, \cilk$ 
and $(i, \jdp) \not\in \cikk, \cikl$ as in \figref{fig:Snonzero1}.
Hence, $\normz$ can be reduced by
$(i,j) (\idp ,\jp) (\ip , \jdp) - (\idp ,j) (\ip, \jp) (i, \jdp)$.
\begin{figure}[!h]
\begin{center}
\vspace{-2mm}
\includegraphics[]{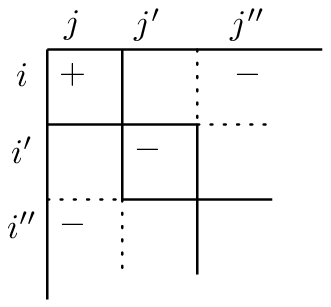}
\vspace{-4mm}
\figcaption{$\Bz$ with the condition of Lemma \ref{lemma:deg3}.}
\label{fig:Snonzero1}
\vspace{-3mm}
\end{center}
\end{figure}
\end{proof}
%
%
\begin{lemma} \label{lemma:deg3or4}
Suppose that $\zij >0, (i,j) \in \cikk$
and there exists $l \in L$ satisfying
\[
	r_l \leq \exists \ip < r_{l+1} : \zipj <0 ~~ \text{and} ~~ \Bz(\cikl) > 0
\]
or
\[
	c_l \leq \exists \jp < c_{l+1} : \zijp <0 ~~ \text{and} ~~ \Bz(\cilk) > 0.
\]
Then $\normz$ can be reduced by a move of Type III or IV in $\ms$.
\end{lemma}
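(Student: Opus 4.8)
The plan is to reduce to one case by transposition and then to assemble an explicit move of Type III or IV from the four sign-definite blocks $\cikk,\cikl,\cilk,\cill$ picked out by the hypotheses. Transposing the table interchanges rows and columns, fixes the diagonal blocks, and swaps $\cikl$ with $\cilk$; it therefore turns the first displayed condition ($\zipj<0$ for some $\ip$ in block-row $l$, together with $\Bz(\cikl)>0$) into the second. Since Types III and IV are closed under transposition, it suffices to treat the first condition. After a block-wise interchange of rows and columns I would record the available cells: $(i,j)\in\cikk$ with $\zij>0$; a cell $(\ip,j)\in\cilk$ with $\zipj<0$; a positive cell of $\cikl$ (as $\Bz(\cikl)>0$); and a negative cell of $\cill$ (as $l\in L$). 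By Lemma \ref{lemma:p-and-m} each block is sign-definite, so every cell of $\cikk,\cikl$ is nonnegative and every cell of $\cilk,\cill$ is nonpositive.

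These four blocks are the four corners, with signs $+,+,-,-$, of the $2\times2$ grid on block-rows and -columns $k$ and $l$, and the goal is to splice them into a single move. The cells $(i,j)$ and $(\ip,j)$ already cancel in column $j$. I would next seek a positive cell of $\cikl$ and a negative cell of $\cill$ in a common column of block-column $l$, so that they cancel there as well, leaving four rows unbalanced. To close them I would use the vanishing row sum at $i$ to produce a negative cell in row $i$ and the vanishing row sum at $\ip$ to produce a positive cell in row $\ip$; because $\cikk,\cikl$ are nonnegative and $\cilk,\cill$ nonpositive, both of these necessarily lie in block-columns other than $k$ and $l$. Feeding the vanishing column sums at these two columns back into block-rows $k$ and $l$ would then supply the last two cells, giving the eight cells of a Type IV move whose two $S$-cells lie in the distinct diagonal blocks $\cikk,\cill$ and whose rows pair as Type IV requires ($i$ with the $\cikl$-row inside block-row $k$, and $\ip$ with the $\cill$-row inside block-row $l$). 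Subtracting this move cancels $\Bz$ at all eight cells and reduces $\normz$.

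When some of the closing cells coincide or are superfluous---for instance when one third column already rebalances two of the four rows---the construction collapses from eight cells to six and produces a Type III move, which I would treat as the degenerate branch of the same analysis. The hard part is exactly the alignment of these closing cells: vanishing of a row or column sum yields a compensating cell of the correct sign, but not one guaranteed to sit in the specific block needed to keep the two $S$-cells in $\cikk$ and $\cill$ and all remaining cells in $S^C$, which is what makes the resulting pattern a legal move (and preserves $\Bz(S)$). The crux is thus to show that the positive cell of $\cikl$ and the negative cell of $\cill$ may be taken in one column, and that the third-block cells closing the remaining rows can be chosen with matching signs; where a direct choice fails I would exploit the sign-definiteness of the intervening blocks, and, if necessary, first reduce $\normz$ by a shorter move so as to peel off the obstructing configuration as a separate sub-case.
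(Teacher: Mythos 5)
Your main branch is, in essence, the paper's own construction for the favorable case: the paper reduces $\normz$ by exactly the Type IV move you describe, assembled from $(i,j)$, $(\ip,j)$, a positive cell of $\cikl$ and a negative cell of $\cill$ lying in a common column, plus two closing columns outside block-columns $k$ and $l$. However, the two points you yourself flag as ``the crux'' are genuine gaps, and the paper resolves them differently from what your fallbacks suggest. First, the common column need not exist, so it cannot be ``shown''; this is precisely where the paper splits into cases. If no column of block-column $l$ contains both a positive cell of $\cikl$ and a negative cell of $\cill$, then every column meeting a negative cell of $\cill$ is nonpositive throughout $\cikl$; combined with $\Bz(\cilk)\leq 0$ (forced by the negative cell $(\ip,j)$ and block-wise sign-definiteness from Lemma \ref{lemma:p-and-m}), any negative cell of $\cill$ in such a column satisfies, after reversing the sign of $\Bz$ and swapping the roles of $k$ and $l$, the hypotheses of Lemma \ref{lemma:deg3}, which then supplies a Type III reduction. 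That Type III move is not a degeneration or ``collapse'' of your eight-cell pattern, nor something you can reach by peeling off a shorter move first; it is a different move provided by an already-proved lemma, and this step is absent from your sketch.

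Second, your requirement that the two closing cells also be sign-matched is both unachievable in general and unnecessary: with six of the eight cells matched, the $l_1$-norm already drops by at least $6-2=4$. What does need care is fiber feasibility, and there your choice is the problematic one: you pick one closing column via a negative cell in row $i$ and the other via a positive cell in row $\ip$. With that mixed choice the move can in general be applied to neither table: the positive move-cell $(i_2,j_1)$ may sit where $x_{i_2 j_1}=0$, and the negative move-cell $(i_1,j_2)$ may sit where $y_{i_1 j_2}=0$, so neither $\Bx$ nor $\By$ can absorb the move while staying in $\cFt$. The repair is to choose both closing columns on the same side, e.g.\ both via positive cells of $\Bz$ in rows $\ip$ and $i_2$ (these exist because the row sums of $\Bz$ vanish, and they lie outside block-columns $k$ and $l$ because $\cilk$ and $\cill$ are nonpositive); then every positive cell of the move sits where $\Bx\geq 1$, the move (or rather its negation, which is also in the sign-invariant basis) can be added to $\Bx$ within the fiber, and the six matched cells still give the norm reduction.
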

\begin{proof}
After an appropriate block-wise interchange of rows and columns,
we assume that
$k=1, l=2$, $r_2 \leq \exists \ip < r_3: \zipj <0$ and $\Bz(\cI _{12})> 0$.
If there exists a pair of cells 
$(i_1, \jp) \in \cI _{12}$ and $(i_2, \jp) \in \cI _{22}$ 
with $z_{i_1 \jp} > 0, z_{i_2 \jp} < 0$ as in \figref{fig:lem3or4-1},
$\normz$ can be reduced by
$(i,j) (\ip , j_2) (i_1 , \jp) (i_2, j_1) - (\ip ,j) (i_1, j_2) (i_2, \jp) (i, j_1)$.
\begin{figure}[!h]
\begin{center}
\vspace{-2mm}
\includegraphics[]{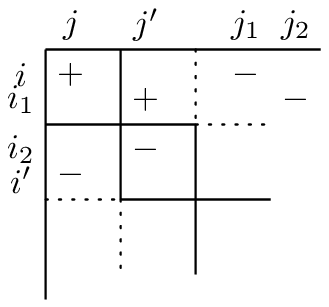}
\vspace{-4mm}
\figcaption{$\Bz$ with the condition of Lemma \ref{lemma:deg3or4}.}
\label{fig:lem3or4-1}
\vspace{-3mm}
\end{center}
\end{figure}
If there exists no such pair,
$\Bz$ satisfies $\Bz(\cI _{11}) >0, \Bz(\cI _{22}) <0$ and
there exists a cell $(\ip , \jp) \in \cI _{22}$ with $- \zipjp >0$
such that $r_1 \leq \forall \idp < r_2: -\zidpjp \geq 0$ 
and $c_1 \leq \forall \jdp < c_2: -\zipjdp \geq 0$.
Hence, by Lemma \ref{lemma:deg3}, 
$\normz$ can be reduced by a move of Type III.
\end{proof}

\begin{proof}[Proof of Theorem \ref{th:markov-common}]
By Lemmas \ref{lemma:p-and-m}, \ref{lemma:szero} and \ref{lemma:deg3or4},
it is enough to show that
$\normz$ can be reduced by a move 
of types I--IV
under the following conditions:
\begin{itemize}
\setlength{\itemsep}{0pt}
  \item $\Bz(\cikl) \geq 0$ or $\Bz(\cikl) \leq 0$ holds for all $1 \leq k, l \leq N$ and $\Bz(S) \neq 0$.
  \item For every $k \in K$, $l \in L$ and $(i,j) \in \cikk$ with $\zij >0$,
		\[
			 r_l \leq \forall i^{\prime} < r_{l+1} : z_{i^{\prime}j} \geq 0 \quad 
				\text{or} \quad \Bz(\cI_{kl}) \leq  0 \\
		\]
		and
		\[
			c_l \leq \forall j^{\prime} < c_{l+1} : z_{ij^{\prime}} \geq 0 \quad 
				\text{or} \quad \Bz(\cI_{lk}) \leq  0.
		\]
\end{itemize}
For such $\Bz$ fix $k \in K$ and $l \in L$ and
consider the case that 
the above conditions are satisfied 
by $\Bz(\cikl) \leq 0$ and $\Bz(\cilk) \leq 0$
as in (a) of \figref{fig:proof-thm}. 
In \figref{fig:proof-thm} we assume that $k=1$ and $l=2$ without loss of generality.
\begin{figure}[!h]
\begin{center}
\vspace{-2mm}
\includegraphics[]{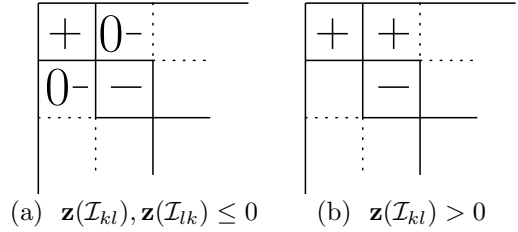}
\vspace{-4mm}
\figcaption{$\Bz$ with the conditions in the proof of Theorem \ref{th:markov-common}.}
\label{fig:proof-thm}
\vspace{-3mm}
\end{center}
\end{figure}
In this case $\normz$ can be reduced by a move of Type III
from the sign-reverse case of Lemma \ref{lemma:deg3}.
Finally, consider the case that at least one of
 $\Bz(\cikl) \leq 0$ and $\Bz(\cilk) \leq 0$
does not hold.
Let $\Bz(\cikl) > 0$ as in (b) of \figref{fig:proof-thm}. 
It is obvious from Lemma \ref{lemma:deg3} 
that $\normz$ can be reduced by a move of Type III.
\end{proof}

\section{Markov bases for general block diagonal effect models}
\label{sec:general}
In the common block diagonal effect model
we assume that every diagonal block has 
the common parameter $\gamma$.
In this section we discuss the case that each diagonal block $S_n$
has its own parameter $\gamma_n$
and more general cases
of block diagonal effect.

We introduce the following model for block diagonal effect 
by the slight modification to \eq{eqn:common}.
Let $S_n, n=1,\ldots ,N$, be the set of cells belonging to the $n$-th 
diagonal block defined as
\[
	S_n=\{ (i,j) \mid r _n \leq i < r _{n+1}, 
	c _n \leq j < c _{n+1} \} .
\]
Then the block diagonal effect model with block-wise 
parameters $\gamma_n, n=1,\ldots ,N,$ is defined by
\begin{eqnarray} \label{eqn:own}
	\log \pij = \mu + \alpha _i + \beta _j +\sumnN \gamma_n I_{S_n}(i,j).
\end{eqnarray}
Note that the model \eq{eqn:own} contains 
the quasi-independence model
considered in \citet{common-diagonal} as a special case.

The sufficient statistic for the model \eq{eqn:own} consists of
the row sums, column sums and the sums of frequencies 
in the block diagonal sets $S_n, n=1,\ldots,N$, and is summarized as
\[
	\Bt = (x_{1+}, \ldots, x_{R+}, x_{+1}, \ldots, x_{+C}, x_{S_1}, \ldots, x_{S_N}) ^{\prime}.
\]
Then a Markov basis for $\asns$ is obtained by essentially the 
same arguments in the proof of Theorem \ref{th:markov-common}.
%
%
\begin{proposition} \label{prop:markov-own}
If $N=2$, the set of moves of Type I in $\msns$
forms the unique minimal Markov basis for the configuration $\asns$. 
If $N \geq 3$, the set of moves of Types I and II
in $\msns$ forms the unique minimal Markov basis for the configuration $\asns$. 
\end{proposition}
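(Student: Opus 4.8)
My plan is to follow the proof of Theorem \ref{th:markov-common} almost verbatim, the key new feature being that fixing each diagonal block sum $\xSn$ \emph{separately} is a strictly stronger constraint than fixing only the total diagonal sum, and this added rigidity is exactly what eliminates the need for the degree-three and degree-four moves of Types III and IV. Write $\Bz = \Bx - \By$ for $\Bx, \By$ in a common fiber of $\asns$, and set $S = \bigcup_{n=1}^N S_n$. I would first observe that the within-block reduction of Lemma \ref{lemma:p-and-m} carries over: a basic move whose four cells all lie in one block $\cikl$ contributes two positive and two negative cells to that single block, hence it preserves every $\xSn$ and so lies in $\msns$ (not merely in $\ms$). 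Therefore, as long as some block $\cikl$ contains cells of both signs, $\normz$ can be reduced by a Type I move in $\msns$. The decisive observation is what happens when no such block remains: for every diagonal block the fiber constraint gives $\sum_{(i,j) \in \cinn} \zij = 0$, and together with $\Bz(\cinn) \geq 0$ or $\Bz(\cinn) \leq 0$ this forces $\Bz(\cinn) = 0$. Thus once the blocks are sign-constant the support of $\Bz$ is confined to the off-diagonal blocks and $\Bz(S) = 0$.

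For $N \geq 3$ I would then invoke Lemma \ref{lemma:szero}, whose hypothesis $\Bz(S) = 0$ now holds. The point to verify is that every move used in the proof of that lemma lies in $\msns$: each degree-two move there has all four cells in off-diagonal blocks and each degree-three move is a Type II move with its cells in distinct off-diagonal blocks, so none of them disturbs any $\xSn$. Hence any nonzero $\Bz$ can be reduced by a move of Type I or II in $\msns$, and these moves form a Markov basis. Types III and IV never appear precisely because they redistribute mass among diagonal blocks, which is impossible once each $\xSn$ is frozen.

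For $N = 2$ the argument collapses further. The only off-diagonal blocks are $\cI _{12}$ and $\cI _{21}$, and they share neither a row nor a column; consequently a $\Bz$ supported off the diagonal with vanishing row and column sums restricts to a zero-margin table on each block separately, and sign-constancy of each block would then force $\Bz = 0$. So a nonzero $\Bz$ must contain a block with cells of both signs, which Lemma \ref{lemma:p-and-m} reduces by a single Type I move; no Type II move exists when there are only two off-diagonal blocks, matching the statement.

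Finally, to promote ``Markov basis'' to ``unique minimal Markov basis'' I would show every listed move is indispensable and then apply the criterion of \cite{takemura-aoki-2004aism}. For a Type I move the fiber of $\Bz^+$ is the set of $2 \times 2$ zero-one tables with unit margins, namely $\{ \Bz^+, \Bz^- \}$. For a Type II move the fiber of $\Bz^+$ consists of the permutation matrices on its three rows and three columns whose support avoids the diagonal blocks; the band alignment built into the definition of Type II ensures that the identity and the three transpositions each place a cell on the diagonal, so only the two cyclic permutations $\Bz^+$ and $\Bz^-$ survive. I expect the indispensability of the Type II moves --- checking that every non-cyclic permutation meets a diagonal block --- to be the most delicate bookkeeping of the argument, whereas the reduction itself is immediate once the vanishing of the diagonal blocks is in hand.
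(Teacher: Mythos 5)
Your proposal follows the paper's proof route exactly: reduce any sign-mixed block by Type I moves via Lemma \ref{lemma:p-and-m}, observe that the separate constraints $\zSn = 0$ plus block-wise sign-constancy force the diagonal blocks of $\Bz$ to vanish, rerun the argument of Lemma \ref{lemma:szero} for $N \geq 3$, settle $N=2$ by the rigidity of the two off-diagonal blocks, and obtain uniqueness from indispensability via \cite{takemura-aoki-2004aism}. You are in fact more explicit than the paper, whose entire proof consists of the phrases ``by the same argument in the proof of Lemma \ref{lemma:p-and-m}'' and ``\ldots of Lemma \ref{lemma:szero}''; in particular your check that every move borrowed from Lemma \ref{lemma:szero} has all its cells in off-diagonal blocks, hence lies in $\msns$, is exactly the verification the paper leaves implicit.

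Two of your justifications need repair, one cosmetic and one substantive. (a) The reducing moves in Lemma \ref{lemma:p-and-m} do not all have their four cells in a single block: when the positive and negative cells of $\cikl$ share a row (unavoidable, e.g., if the block has a single row), the reduction recruits a third row, possibly from another row-block, so the move spans two blocks in the same column-block. Such a move still meets each block in a pair of cells of opposite sign, so it preserves every $\xSn$ and your conclusion stands, but for that reason, not the one you give. (b) More seriously, no ``band alignment'' is built into the paper's definition of Type II, which asks only that the six nonzero cells lie in distinct blocks of $S^C$. For $N=3$ the alignment is forced (distinctness of the six blocks makes the three row-blocks and three column-blocks each a permutation of $\{1,2,3\}$, and off-diagonality then forces the row-block of $i_m$ to equal the column-block of $j_m$), so there your indispensability argument is sound once this derivation is supplied. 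But for $N \geq 4$, put $i_1,i_2,i_3$ in row-blocks $1,2,3$ and $j_1,j_2,j_3$ in column-blocks $1,2,4$: this satisfies the Type II definition, yet the fiber of $\Bz^+$ also contains the table with cells $(i_1,j_2),(i_2,j_1),(i_3,j_3)$, all off-diagonal, through which the move factors into two Type I moves of $\msns$. Hence such Type II moves are dispensable, and for $N \geq 4$ the set of all Type I and II moves is a Markov basis but not a minimal one; minimality requires restricting Type II to the aligned moves. This gap is inherited from the paper itself, whose one-line assertion that all Type II moves are indispensable has the same defect, so your instinct that this is ``the most delicate bookkeeping'' is exactly right --- it is the one step that cannot be waved through.
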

\begin{proof}
When $N=2$, it is easy to see that
fixing the sums in $\Bt$ is equivalent to 
fixing the sums in $\Bt$ and the sums of $\cI _{12}$ and $\cI _{21}$.
Then, if $\Bz \neq 0$ there exists a block containing 
both a positive cell and a negative cell.
Hence by the same argument in the proof of Lemma \ref{lemma:p-and-m}
$\normz$ can be reduced by a move of Type I.

Let us consider the case of $N \geq 3$.
If there exists a diagonal block $\Bz(S_n) \neq 0$ for $1 \leq n \leq N$,
$S_n$ contains both a positive cell and a negative cell.
Then, by the same argument in the proof of Lemma \ref{lemma:p-and-m},
$\normz$ can be reduced by a move of Type I.
In the case that $\Bz(S_n) =0, n=1,\ldots ,N$, by the same argument
in the proof of Lemma \ref{lemma:szero},
$\normz$ can be reduced by a move of Type II.

The  uniqueness of the minimal Markov basis follows, since 
the basic moves and  the moves of Type  II in $\msns$ are indispensable.
\end{proof}

In Theorem \ref{th:markov-common} in Section \ref{sec:common}
and Proposition \ref{prop:markov-own},
we assumed
$r_{n+1}=R+1$ and $c_{n+1} =C+1$ in the definition of subtables.
In fact, the set of moves of Types I -- IV forms a Markov basis
for the configurations arising from more general block diagonal effect models.
Let $1= \rsn \leq R+1$, $1=\csn \leq C+1$
and $S_n=\{ (i,j) \mid r _n \leq i < r _{n+1}, c _n \leq j < c _{n+1} \} $.
Denote $\cS = \{ S_1,\ldots, S_N \}$.
Let $T_q, q=1,\ldots, Q$, be the subtables of $\cI$ of the form 
$T_q = \bigcup _{n \in \hat{\cN}_q} S_n$
where $\hat{\cN}_q$ is a subset of $\{ 1, 2, \ldots, N \}$
with $\hat{\cN}_q \cap \hat{\cN}_{q^\prime} = \emptyset$ 
for $1 \leq q <q^{\prime} \leq Q$.
Then the general block diagonal effect model is defined by
\begin{eqnarray} \label{eqn:general}
	\log \pij = \mu + \alpha _i + \beta _j +\sumqQ \gamma_q I_{T_q}(i,j).
\end{eqnarray} 
The sufficient statistic of the model \eq{eqn:general} is summarized as
\[
	(x_{1+}, \ldots, x_{R+}, x_{+1}, \ldots, x_{+C}, x_{T_1}, \ldots, x_{T_Q}) ^{\prime}.
\]
By the same argument of the proofs of Theorem \ref{th:markov-common}
and Proposition \ref{prop:markov-own},
we have the following corollary.
%
%
\begin{corollary} \label{corollary:general}
The set of moves of Types I--VI in $\mtqs$ forms 
a Markov basis for the configuration $\atqs$. 
\end{corollary}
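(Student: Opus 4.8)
The plan is to rerun the reduction of Theorem~\ref{th:markov-common} almost verbatim, the only change being that the single diagonal statistic $x_S$ is refined into the group statistics $x_{T_1},\dots,x_{T_Q}$. Write $D=\bigcup_{q=1}^{Q}T_q$ for the union of the grouped diagonal blocks. A table $\Bz$ lies in $\mtqs$ exactly when it preserves the row sums, the column sums and every group sum, i.e.\ $\sum_{(i,j)\in T_q}\zij=0$ for each $q$. The first thing I would record is which moves of Types~I--IV actually belong to $\mtqs$. A move of Type~I is supported on a single block and a move of Type~II lies entirely in off-diagonal blocks of $S^{C}$, so each of these preserves every $x_{T_q}$ automatically. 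A move of Type~III or~IV carries exactly one $+1$ and one $-1$ on the diagonal, say in blocks $S_a$ and $S_b$; it therefore preserves all the group sums precisely when $S_a$ and $S_b$ lie in the same group $T_q$. Thus ``Type~III or~IV in $\mtqs$'' means exactly the moves of those shapes whose two diagonal cells sit in distinct diagonal blocks belonging to a common group.

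Next I would transfer the reduction lemmas. Lemma~\ref{lemma:p-and-m} concerns an arbitrary block and uses only a Type~I move supported there, so it carries over verbatim; Lemma~\ref{lemma:szero} uses only Types~I and~II and transfers with $D$ in place of $S$. Applying them we dispose of the case $\Bz(D)=0$ and may assume every block is sign-definite. The new ingredient is needed only when $\Bz(D)\neq0$. Here the key observation is that, for sign-definite blocks, the constraint $\sum_{(i,j)\in T_q}\zij=0$ forces the positive and negative diagonal blocks inside each group to balance: whenever a diagonal block $S_k\subseteq T_q$ is positive there is a negative diagonal block $S_l\subseteq T_q$ in the same group. (A singleton group then contributes nothing after the sign-definite reduction, recovering the situation of Proposition~\ref{prop:markov-own}, while for $Q=1$ this is just the vanishing of the total diagonal count used in Section~\ref{sec:common}.) If the $T_q$ do not exhaust all diagonal blocks, any ungrouped block carries no constraint and is treated exactly like an off-diagonal block.

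With this in hand I would repeat Lemmas~\ref{lemma:deg3}, \ref{lemma:deg3or4} and the proof of Theorem~\ref{th:markov-common}, inserting into each of their hypotheses the requirement that the positive block $k\in K$ and the negative block $l\in L$ being paired lie in the same group. The within-group balance just established guarantees that, for any positive diagonal block, such a partner exists. All the geometric constructions in those proofs merely locate compensating cells along a row, along a column, or in off-diagonal blocks of $S^{C}$, none of which alters any $x_{T_q}$; hence the sole extra requirement for the resulting degree-$3$ or degree-$4$ move to lie in $\mtqs$ is that its two diagonal cells share a group, which the same-group choice supplies. The two extreme cases serve as a check: for $Q=1$ every pair is admissible and we recover Theorem~\ref{th:markov-common}, whereas when each $T_q$ is a single block no two distinct diagonal blocks share a group, no Type~III or~IV move survives in $\mtqs$, and the argument collapses onto Proposition~\ref{prop:markov-own}, which needs only Types~I and~II.

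The main obstacle is to confirm that the same-group restriction never stalls the reduction. One has to check that in every branch of the case analysis of Lemmas~\ref{lemma:deg3}, \ref{lemma:deg3or4} and of the theorem, the negative block furnished by the within-group balance can play the role demanded of $l$, so that a Type~III or~IV move in $\mtqs$ is always available when $\Bz(D)\neq0$. Since the branching there is dictated entirely by the signs of the off-diagonal blocks $\cikl,\cilk$ and by the row and column positions of cells---data untouched by the partition into groups---I expect each branch to close with the very move used in Section~\ref{sec:common}, so that the verification is routine bookkeeping rather than a new idea.
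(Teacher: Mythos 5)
Your proposal is correct and takes essentially the same route as the paper, which proves this corollary in a single line by appealing to the arguments of Theorem~\ref{th:markov-common} and Proposition~\ref{prop:markov-own}; your within-group balance of positive and negative diagonal blocks, and the resulting same-group pairing of $k\in K$ with $l\in L$ in Lemmas~\ref{lemma:deg3} and \ref{lemma:deg3or4}, is precisely the content that makes that appeal work. The one loose statement---that Type~I moves are supported on a single block and hence preserve every $x_{T_q}$ automatically---is harmless, since the multi-block basic moves needed in transferring Lemma~\ref{lemma:szero} are exactly those whose legality is settled by that lemma's corner-block case analysis, which carries over with $D=\bigcup_q T_q$ in place of $S$.
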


\section{Numerical experiments}
\label{sec:examples}

In this section we apply the MCMC method
with the Markov bases derived in the previous sections
for performing conditional tests of some data sets.

The first example is Table \ref{table:school} which shows the relationship
between school and clothing for 1725 children.
This data is from \citet{gilby-1911}.
Each row represents a primary school of the usual county-council type.
The rows are arranged in ascending order of the wealth of the children.
The children are also classified by their clothing and
the columns are arranged in ascending order of the neatness of their clothing.
\begin{table}[htbp]
\begin{center}
\renewcommand{\tabcolsep}{5pt}
{\small
\tblcaption{Relationship between school and clothing.}
\label{table:school}
\begin{tabular}{|l|cccc|c|}
\hline
& I & II & III & IV \& V & Total \\ \hline
No. 1 & 86 & 49 & 10 & 1 & 146 \\
No. 2 & 102 & 116 & 24 & 3 & 245 \\
No. 3 & 25 & 19 & 2 & 0 & 46 \\
No. 4 & 137 & 98 & 33 & 4 & 272 \\
No. 5 & 209 & 222 & 73 & 16 & 520 \\
No. 6 & 65 & 154 & 71 & 27 & 317 \\
No. 7 & 9 & 33 & 1 & 1 & 44 \\
No. 8 & 3 & 60 & 51 & 21 & 135 \\ \hline
Total & 636 & 751 & 265 & 73 & 1725 \\ \hline
\end{tabular}
}
\end{center}
\end{table}
We set the model \eq{eqn:change} with two subtables
$S_1 = \{ (i,1) \mid 1 \leq i \leq 3 \}$,
$S_2 = \{ (i,j) \mid 1 \leq i \leq 5, 1 \leq j \leq 2 \}$
as a null hypothesis.
Starting from the observed data in Table \ref{table:school}
we run a Markov chain of 100,000 tables including 10,000 burn-in steps
and compute the chi-square statistic
for each sampled table.
The histogram of chi-square statistics is shown in \figref{fig:hist-change}.
In the figure the black line shows the asymptotic distribution $\chi _{19}^2$.
Since the observed data is large enough,
the estimated exact distribution is close to $\chi _{19}^2$.
For the observed data in Table \ref{table:school}
the value of chi-square statistic is 154 
and the approximate $p$-value is essentially zero.
Therefore the change point model \eq{eqn:change} is rejected
at the significant level of $5 \%$.
\begin{figure}[!h]
\begin{center}
\vspace{-2mm}
\includegraphics[width=6cm]{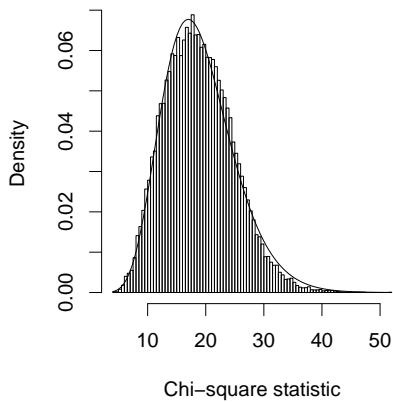}
\vspace{-4mm}
\figcaption{A histogram of chi-square statistic.}
\label{fig:hist-change}
\vspace{-3mm}
\end{center}
\end{figure}

The second example is Table \ref{table:bd} which shows the relationship
between birthday and deathday for 82 descendants of Queen Victoria.
This data is from \citet{diaconis-sturmfels}.
\begin{table}[htbp]
\begin{center}
\renewcommand{\tabcolsep}{5pt}
{\small
\tblcaption{Relationship between birthday and deathday.}
\label{table:bd}
\begin{tabular}{|l|cccccccccccc|}
\hline
& Jan & Feb & March & April & May & June & July & Aug & Sep & Oct & Nov & Dec \\ \hline
Jan & 1 & 0 & 0 & 0 & 1 & 2 & 0 & 0 & 1 & 0 & 1 & 0 \\
Feb & 1 & 0 & 0 & 1 & 0 & 0 & 0 & 0 & 0 & 1 & 0 & 2 \\
March & 1 & 0 & 0 & 0 & 2 & 1 & 0 & 0 & 0 & 0 & 0 & 1 \\
April & 3 & 0 & 2 & 0 & 0 & 0 & 1 & 0 & 1 & 3 & 1 & 1 \\
May & 2 & 1 & 1 & 1 & 1 & 1 & 1 & 1 & 1 & 1 & 1 & 0 \\
June & 2 & 0 & 0 & 0 & 1 & 0 & 0 & 0 & 0 & 0 & 0 & 0 \\
July & 2 & 0 & 2 & 1 & 0 & 0 & 0 & 0 & 1 & 1 & 1 & 2 \\
Aug & 0 & 0 & 0 & 3 & 0 & 0 & 1 & 0 & 0 & 1 & 0 & 2 \\
Sep & 0 & 0 & 0 & 1 & 1 & 0 & 0 & 0 & 0 & 0 & 1 & 0 \\
Oct & 1 & 1 & 0 & 2 & 0 & 0 & 1 & 0 & 0 & 1 & 1 & 0 \\
Nov & 0 & 1 & 1 & 1 & 2 & 0 & 0 & 2 & 0 & 1 & 1 & 0 \\
Dec & 0 & 1 & 1 & 0 & 0 & 0 & 1 & 0 & 0 & 0 & 0 & 0 \\
\hline
\end{tabular}
}
\end{center}
\end{table}
Let $r_i = c_i = 1+3 (i-1)$ for $i=1,\ldots,5$, permitting the replication modulo 12.
We test the common block diagonal effect model \eq{eqn:common} 
against the block diagonal model \eq{eqn:own} with several parameters.
Starting from the observed data in Table \ref{table:bd}
we run a Markov chain of 1,000,000 tables including 100,000 burn-in steps
and compute ($2 \times$) the log-likelihood ratio statistic
\[
		2\sumij \xij \log \frac{\hmij ^2}{\hmij ^1}
\]
for each sampled table $\Bx = \{ \xij \}$ where 
$\hmij ^1$ and $\hmij ^2$ denote
the expected cell frequencies under the model \eq{eqn:common}
and \eq{eqn:own}, respectively.
The histogram of log-likelihood statistics is shown in \figref{fig:hist-llr}.
In the figure the black line shows the asymptotic distribution $\chi _3^2$.
From the sparsity of Table \ref{table:bd}
the estimated exact distribution is different from the asymptotic distribution $\chi _3^2$.
For the observed data in Table \ref{table:bd}
the value of log-likelihood ratio statistic is 3.07 
and the approximate $p$-value is 0.43.
Therefore the common diagonal block effect model \eq{eqn:common} is accepted
at the significant level of $5 \%$.
\begin{figure}[!h]
\begin{center}
\vspace{-14mm}
\includegraphics[width=7cm]{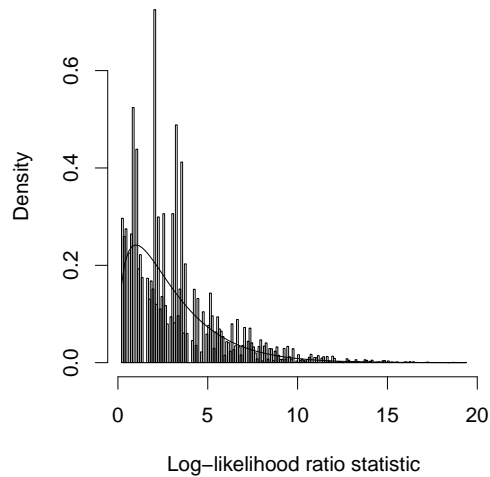}
\vspace{-4mm}
\figcaption{A histogram of log-likelihood ratio statistic.}
\label{fig:hist-llr}
\vspace{-3mm}
\end{center}
\end{figure}

\section{Concluding remarks}
\label{sec:remarks}

In this paper we derive the explicit forms of the Markov bases
for some statistical models with block-wise subtable effects
and perform the conditional testing with some real data sets.
For the change point model we also discussed the algebraic properties 
of the configuration arising from the model.
\citet{subtable} gave the necessary and sufficient condition on the subtable
so that the set of basic moves forms a Markov basis.
It is of interest to consider a necessary and sufficient condition on subtables
$S_1, \ldots ,S_N$ so that the set of basic moves forms a Markov basis.


\section*{Acknowledgment}
We are very grateful to Hisayuki Hara for valuable discussions.

\bibliographystyle{plainnat}
\bibliography{subsum}

\end{document}